	\newcommand{\multialg}[1]{\mathcal{M}(#1)\xspace}
	\theoremstyle{plain}
	\newtheorem{thm}{Theorem}[section]
	\newtheorem{lemma}[thm]{Lemma}
	\newtheorem{theorem}[thm]{Theorem}
	\newtheorem{proposition}[thm]{Proposition}
	\newtheorem{corollary}[thm]{Corollary}
	\theoremstyle{definition}
	\newtheorem{definition}[thm]{Definition}
	\newtheorem{remark}[thm]{Remark}
	\newtheorem{example}[thm]{Example}
	\numberwithin{equation}{section}
	\numberwithin{figure}{section}
\begin{document}
	\title{Quasidiagonal traces on exact $C^\ast$-algebras}
	\author{James Gabe}
        \address{Department of Mathematics and Computer Science \\
        The University of Southern Denmark \\
        Campusvej 55 \\
        DK–5230 Odense M, Denmark}
        \email{jamiegabe123@hotmail.com}
	\subjclass[2000]{46L05, 46L35, 46L80}
	\keywords{Traces, nuclear maps, order zero maps, quasidiagonal $C^\ast$-algebras}

\begin{abstract}
Recently, it was proved by Tikuisis, White and Winter that any faithful trace on a separable, nuclear $C^\ast$-algebras in the UCT class is quasidiagonal.
Building on their work, we generalise the result, and show that any faithful, amenable trace on a separable, exact $C^\ast$-algebras in the UCT class is quasidiagonal.
We also prove that any amenable trace on a separable, exact, quasidiagonal $C^\ast$-algebra in the UCT class is quasidiagonal.
\end{abstract}

\maketitle

\section{Introduction}

The classification of separable, nuclear $C^\ast$-algebras using $K$-theory and traces, the Elliott classification programme, has seen prominent progress over last 25 years.
The most recent success is due to Tikuisis, White and Winter in \cite{TikuisisWhiteWinter-QDnuc} which shows that separable, unital, simple, non-elementary $C^\ast$-algebras in the UCT class with finite nuclear dimension are classified by their Elliott invariant. This very complete classification result builds on a long line of results over the last 25 years, such as \cite{Kirchberg-simple}, \cite{Phillips-classification}, \cite{Lin-classtracialtoprankzero}, \cite{GongLinNiu-classZ-stable}, \cite{ElliottGongLinNiu-classfindec}.

One of the early major successes in the classification programme was the Kirchberg--Phillips theorem \cite{Kirchberg-simple}, \cite{Phillips-classification}, the classification of separable, nuclear, unital, simple, purely infinite $C^\ast$-algebras in the UCT class, by the $K$-groups and the position of the unit in $K_0$.
The UCT class refers to the class of separable $C^\ast$-algebras satisfying the universal coefficient theorem of Rosenberg and Schochet \cite{RosenbergSchochet-UCT}, or equivalently, the class of separable $C^\ast$-algebras which are $KK$-equivalent to abelian $C^\ast$-algebras.

In Kirchberg's approach to the classification result (see \cite{Kirchberg-simple} or \cite{Rordam-book-classification}), much more is actually proved. It follows from this approach, that if $A$ is a separable, exact, unital $C^\ast$-algebra in the UCT class and $B$ is a unital, purely infinite $C^\ast$-algebra, then any pointed homomorphism $\phi \colon (K_\ast(A),[1_A]_0) \to (K_\ast(B),[1_B]_0)$ lifts to a full, unital, \emph{nuclear} $\ast$-homomorphism $A \to B$. Using $KK$-theory (resp.~total $K$-theory) one even obtains uniqueness results for such nuclear $\ast$-homomorphisms up to asymptotic (resp.~approximate) unitary equivalence.

The same idea was employed by Dadarlat in \cite{Dadarlat-morphismstracialAF}. He showed that if $A$ and $B$ are separable, simple, unital, tracially AF $C^\ast$-algebras, and $A$ is exact and in the UCT class, then any pointed, order preserving homomorphism $\phi\colon (K_\ast^+(A),[1_A]_0) \to (K_\ast^+(B),[1_B]_0)$ lifts to a nuclear $\ast$-homomorphism $A\to B$. He also obtains uniqueness of such $\ast$-homomorphisms up to approximate unitary equivalence using total $K$-theory. This reproves the classification of separable, nuclear, unital, simple tracially AF $C^\ast$-algebras in the UCT class by Lin \cite{Lin-classtracialtoprankzero}.

It turns out that this phenomenon often occurs. Rather than considering nuclear $C^\ast$-algebras, one might as well consider nuclear maps for which the domain is an exact $C^\ast$-algebra.
This actually also explains why classification only holds for nuclear $C^\ast$-algebras: by using the methods for nuclear $\ast$-homomorphisms with exact domains, the isomorphism one would construct by the classification results would have to be nuclear. Thus the $C^\ast$-algebras we classify would have to be nuclear by Kirchberg's characterisation of nuclear $C^\ast$-algebras as the $C^\ast$-algebras for which the identity map is nuclear \cite{Kirchberg-CPAP}.

Popa's work on simple quasidiagonal $C^\ast$-algebras in \cite{Popa-findimapprox} was the main inspiration for Lin's definition of tracially AF $C^\ast$-algebras, which is a key component in the classification programme. Quasidiagonality of traces was later introduced by Brown in \cite{Brown-invariantmeans} and has also proved to be play an important part in the classification programme. Very recently, Tikuisis, White and Winter showed in \cite{TikuisisWhiteWinter-QDnuc} that any faithful trace on a separable, nuclear $C^\ast$-algebra in the UCT class is quasidiagonal. This result has several remarkable consequences. It follows that $(1)$: the Blackadar--Kirchberg problem \cite{BlackadarKirchberg-genindlimfindim} has an affirmative answer for simple $C^\ast$-algebras in the UCT class, $(2)$: the Rosenberg conjecture is true (see appendix of \cite{Hadwin-stronglyqd}), and as mentioned above $(3)$: separable, unital, simple, non-elementary $C^\ast$-algebras in the UCT class with finite nuclear dimension are classified by their Elliott invariant.

We prove that the following generalisation of the result by Tikuisis, White and Winter for exact $C^\ast$-algebras is true: any faithful, amenable trace on a separable, exact $C^\ast$-algebra in the UCT class is quasidiagonal. In particular, it follows that a separable, exact, unital $C^\ast$-algebra in the UCT class is quasidiagonal if it has a faithful, amenable trace. This is proved by using the results and methods of \cite{TikuisisWhiteWinter-QDnuc} together with results on nuclear order zero maps. We also show that any (not necessarily faithful) amenable trace on a separable, exact, quasidiagonal $C^\ast$-algebra in the UCT class is quasidiagonal. This partially answers a question of N.~Brown \cite[Question 6.7(2)]{Brown-invariantmeans}, asking if amenable traces are always quasidiagonal.

We raise the following questions motivated by the purely infinite case and the tracially AF case: to which extent can the classification result $(3)$ above be obtained from existence and uniqueness results regarding nuclear maps going out of exact $C^\ast$-algebras? Also, should we consider the full Elliott invariant, or only consider amenable traces in the invariant? 

\subsection*{Acknowledgements} The author is grateful to Aaron Tikuisis, Stuart White and Wilhelm Winter for sharing early drafts of their paper \cite{TikuisisWhiteWinter-QDnuc}, for including Lemma 5.7 in their paper, and for several helpful comments and suggestions for this paper.

\section{Nuclear maps}

\subsection{The basics} In this subsection we mention some standard results about nuclear maps which will be used later. We use the standard abbreviation \emph{c.p.~map} for completely positive map.

\begin{definition}
A c.p.~map $\phi \colon A \to B$ is called \emph{nuclear} if it is a point-norm limit of c.p.~maps factoring through matrix algebras.
\end{definition}

Clearly the set of nuclear maps is point-norm closed, and if you compose a nuclear map with any c.p.~map, then you obtain a nuclear map.

Note that we do not assume that the c.p.~maps factoring through matrix algebras in the above definition are contractive (or unital), as done in several places in the literature, e.g.~in \cite{BrownOzawa-book-approx}. However, as shown in \cite[Lemma 2.3]{GabeRuiz-absrep}, this definition agrees with ours for contractive maps. 

\begin{remark}\label{r:nucopconcon}
If $\phi\colon A \to B$ is a nuclear map, $a_1,\dots,a_n\in A$ and $b_1,\dots,b_n \in B$ then the map
\[
A \ni x \mapsto \sum_{i,j=1}^n b_i^\ast \phi(a_i^\ast x a_j) b_j \in B
\]
is nuclear. To see this, let $r \in M_{1,n}(A)$ be the row vector $r= (a_1 \; \dots \; a_n)$, $c\in M_{n,1}(B)$ be the column vector $(b_1 \; \dots \; b_n)^t$, and $\phi^{(n)} \colon M_n(A) \to M_n(B)$ be the induced c.p.~map. Clearly $\phi^{(n)}$ is nuclear, and thus
\[
x \mapsto \sum_{i,j=1}^n b_i^\ast \phi(a_i^\ast x a_j) b_j = c^\ast \phi^{(n)} (r^\ast x r) c
\] 
is nuclear.
\end{remark}

For a $C^\ast$-algebra $B$ we let $\multialg{B}$ denote the multiplier algebra of $B$. Recall, that $\multialg{B}$ embeds canonically into the bidual $B^{\ast \ast}$.

\begin{definition}
Let $A$ and $B$ be $C^\ast$-algebras, and let $M$ be a von Neumann algebra.
\begin{itemize}
\item A c.p.~map $\phi \colon A \to \multialg{B}$ is called \emph{weakly nuclear} if $b^\ast \phi(-) b \colon A \to B$ is nuclear for every $b\in B$.
\item A c.p.~map $\phi \colon A \to M$ is called \emph{weakly nuclear} if it is a point-weak$\ast$ limit of c.p.~maps factoring through matrix algebras.
\end{itemize}
\end{definition}

It could be confusing that we have two different notions called ``weakly nuclear'', but condition $(2)$ in the following proposition motivates this.
The result is well-known, but we will fill in the proof for completion. 

\begin{proposition}\label{p:weaklynucmaps}
Let $A$ and $B$ be $C^\ast$-algebras.
\begin{itemize}
\item[$(1)$] A c.p.~map $\phi \colon A \to B$ is nuclear if and only if the map $\phi \colon A \to B \subseteq B^{\ast \ast}$ is weakly nuclear.
\item[$(2)$] A c.p.~map $\phi \colon A \to \multialg{B}$ is weakly nuclear (in the multiplier algebra sense) if and only if $\phi \colon A \to \multialg{B} \subseteq B^{\ast \ast}$ is weakly nuclear (in the von Neumann algebra sense).
\end{itemize}
\end{proposition}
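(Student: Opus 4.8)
The plan is to prove $(1)$ first and then deduce both implications of $(2)$ from it. One implication of $(1)$ is immediate: a nuclear map $\phi\colon A\to B$ is a point-norm, hence point-weak$\ast$, limit of c.p.\ maps factoring through matrix algebras, and these factor through matrix algebras as maps into $B^{\ast\ast}$ as well. For the converse I would argue in two steps. Write $\mathcal F$ for the convex cone of c.p.\ maps $A\to B$ factoring through matrix algebras, and $\mathcal G$ for the analogous cone of c.p.\ maps $A\to B^{\ast\ast}$. \emph{Step 1:} $\mathcal F$ is point-weak$\ast$ dense in $\mathcal G$. Indeed, a c.p.\ map $\beta\colon M_n\to B^{\ast\ast}$ corresponds (Choi) to a positive element $[\beta(e_{ij})]\in M_n(B^{\ast\ast})_+=\bigl(M_n(B)\bigr)^{\ast\ast}_+$, which by Kaplansky density is a weak$\ast$ limit of positive elements of $M_n(B)$; the associated c.p.\ maps $M_n\to B$ converge to $\beta$ point-weak$\ast$, and precomposing with a fixed c.p.\ map $A\to M_n$ gives the claim, so $\overline{\mathcal G}^{\,\mathrm{pw}\ast}=\overline{\mathcal F}^{\,\mathrm{pw}\ast}$. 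Hence a weakly nuclear (von Neumann sense) map $\phi\colon A\to B^{\ast\ast}$ that takes values in $B$ lies in the point-weak$\ast$ closure of $\mathcal F$. \emph{Step 2:} since the approximants now take values in $B$, a convexity argument upgrades this to point-norm convergence. Fixing $a_1,\dots,a_k\in A$ and $\varepsilon>0$, the set $E=\{(\psi(a_1),\dots,\psi(a_k)):\psi\in\mathcal F\}$ is a convex subset of $B^k$, and $(\phi(a_1),\dots,\phi(a_k))\in B^k$ lies in the weak$\ast$ closure of $E$ taken in $(B^{\ast\ast})^k$; since the weak$\ast$ topology of $B^{\ast\ast}$ restricts to the weak topology $\sigma(B,B^\ast)$ on $B$, this tuple lies in the weak closure of $E$ in $B^k$, which by Mazur's theorem equals its norm closure. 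Thus some $\psi\in\mathcal F$ is within $\varepsilon$ of $\phi$ on $a_1,\dots,a_k$, so $\phi$ is nuclear.

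For $(2)$, suppose first that $\phi\colon A\to\multialg{B}\subseteq B^{\ast\ast}$ is weakly nuclear in the von Neumann sense and fix $b\in B$. Composing a point-weak$\ast$ approximating net for $\phi$ by matrix-factoring c.p.\ maps with the normal map $x\mapsto b^\ast x b$ on $B^{\ast\ast}$ shows that $a\mapsto b^\ast\phi(a)b$ is again weakly nuclear in the von Neumann sense; as $B$ is an ideal in $\multialg{B}$, this map takes values in $B$, so by $(1)$ it is nuclear, and hence $\phi$ is weakly nuclear in the multiplier sense. Conversely, assume $b^\ast\phi(-)b\colon A\to B$ is nuclear for every $b\in B$, and let $(e_\lambda)$ be an increasing approximate unit for $B$. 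Taking $b=e_\lambda$, each $e_\lambda\phi(-)e_\lambda\colon A\to B$ is nuclear, hence lies in $\overline{\mathcal G}^{\,\mathrm{pw}\ast}$. Since $e_\lambda\to 1$ $\sigma$-strongly in $B^{\ast\ast}$ and multiplication is jointly $\sigma$-strongly continuous on bounded sets, $e_\lambda\phi(a)e_\lambda\to\phi(a)$ $\sigma$-weakly for each $a\in A$, so $\phi$ lies in the point-weak$\ast$ closure of $\{e_\lambda\phi(-)e_\lambda:\lambda\}$ and therefore in $\overline{\mathcal G}^{\,\mathrm{pw}\ast}$; that is, $\phi$ is weakly nuclear in the von Neumann sense.

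The step I expect to require the most care is Step~2 of $(1)$: the passage from point-weak$\ast$ to point-norm convergence works only because the approximants have been arranged in Step~1 to take values in $B$ rather than merely in $B^{\ast\ast}$, so that the weak$\ast$ topology in play restricts to an honest weak topology on $B$ and Mazur's theorem applies; directly invoking convexity of $\mathcal G$ in $(B^{\ast\ast})^k$ would be illegitimate, since there the weak$\ast$ topology $\sigma(B^{\ast\ast},B^\ast)$ is strictly coarser than the weak topology. Everything else—the Choi/Kaplansky density argument, the normality of $x\mapsto b^\ast xb$, and the approximate-unit computation—is routine.
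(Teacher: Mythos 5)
Your proposal is correct and follows essentially the same route as the paper: the nontrivial direction of $(1)$ is done by first showing that c.p.\ maps $A\to B^{\ast\ast}$ factoring through matrix algebras are point-weak$\ast$ approximable by ones landing in $B$ (the paper cites the argument of \cite[Proposition 2.3.8]{BrownOzawa-book-approx}, you give the explicit Choi/Kaplansky version), and then upgrading point-weak$\ast$ to point-norm by a Hahn--Banach/Mazur convexity argument (the paper cites \cite[Lemma 2.6]{gabe-cplifting}, you spell it out). Part $(2)$ — cutting by an approximate unit in one direction and using normality of $x\mapsto b^\ast x b$ plus part $(1)$ in the other — is exactly the paper's argument.
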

\begin{proof}
$(1)$: one implication is trivial. We will only sketch the proof of the other implication, since this is essentially the same proof as \cite[Proposition 2.3.8]{BrownOzawa-book-approx}, that if $A^{\ast\ast}$ is semidiscrete then $A$ is nuclear. For any c.p.~map $\psi\colon M_N \to B^{\ast \ast}$, any normal functionals $\eta_1,\dots, \eta_n$ on $B^{\ast \ast}$ and any $\epsilon >0$, there is c.p.~map $\tilde \psi \colon M_N \to B$ such that
\[
| \rho_k(\psi(x) - \tilde \psi(x)) |< \epsilon, \quad x\in (M_N)_1, k=1,\dots,n.
\]
It follows that $\phi$ is the point-weak$\ast$ limit of c.p.~maps $A \to B$ factoring through matrix algebras. By a standard Hahn--Banach separation argument (see e.g.~\cite[Lemma 2.6]{gabe-cplifting}) it follows that $\phi$ is nuclear.

$(2)$: If $\phi \colon A \to \multialg{B}$ is weakly nuclear and $(b_\alpha)$ is an approximate identity in $B$ of positive contractions, then $(b_\alpha \phi(-) b_\alpha)$ is (by assumption) a net of nuclear maps, which converges point-weak$\ast$ to $\phi \colon A \to \multialg{B} \subseteq B^{\ast \ast}$. Conversely, if $\phi \colon A \to \multialg{B} \subseteq B^{\ast\ast}$ is weakly nuclear (in the von Neumann algebra sense), and if $b\in B$, then $b^\ast \phi (-) b \colon A \to B$ is weakly nuclear as a map into $B^{\ast \ast}$. By $(1)$ it follows that $b^\ast \phi(-) b$ is nuclear, so $\phi$ is weakly nuclear in the multiplier algebra sense.
\end{proof}

\subsection{Nuclear order zero maps}

Recall, that a c.p.~map $\phi \colon A \to B$ is called \emph{order zero} if it preserves orthogonality, i.e.~whenever $a,b\in A$ are positive and $ab=0$, then $\phi(a) \phi(b) =0$.

Winter and Zacharias proved in \cite{WinterZacharias-orderzero} that order zero maps have a certain dilation property. This was used to prove (amongst other things) that there is a one-to-one correspondence between contractive order zero maps $A \to B$ and $\ast$-homomorphisms $C_0(0,1] \otimes A \to B$. We will prove nuclear analogues of these results.

The dilation result \cite[Theorem 2.3]{WinterZacharias-orderzero} of Winter and Zacharias is as follows: For $\phi \colon A \to B$ an order zero map, let $C = C^\ast(\phi(A))$. Then there is a positive element $h \in \multialg{C} \cap C'$ and a $\ast$-homomorphism $\pi_\phi \colon A \to \multialg{C} \cap \{h\}'$ such that $h \pi_\phi(-) = \phi(-)$. A priori $h \pi_\phi(-)$ is a map into $\multialg{C}$, but it factors through $C$.

\begin{remark}\label{r:dilationher}
If we let $D$ be the hereditary $C^\ast$-subalgebra of $B$ generated by $C$, i.e.~$D = \overline{CBC}$, then $\multialg{C} \subseteq \multialg{D}$ canonically. Thus we could assume in the above result, that $\pi_\phi \colon A \to \multialg{D}$ and $h\in \multialg{D}$ satisfy $h \pi_\phi(-) = \phi(-)$. This is convenient when working with nuclear order zero maps.
\end{remark}

\begin{proposition}\label{p:orderzerodilation}
Let $A$ and $B$ be $C^\ast$-algebras, $\phi \colon A \to B$ be a nuclear order zero map, let $C = C^\ast(\phi(A))$ and let $D = \overline{CBC}$. Suppose that $h\in \multialg{C} \cap C' \subseteq \multialg{D}$ is a positive element and $\pi_\phi \colon A \to \multialg{C} \cap \{h\}' \subseteq \multialg{D}$ is a $\ast$-homomorphism, such that $h \pi_\phi(-) = \phi(-)$.
Then $\pi_\phi \colon A \to \multialg{D}$ is weakly nuclear.

In particular, $\pi_\phi\colon A \to \multialg{D} \subseteq D^{\ast \ast}$ is weakly nuclear (in the von Neumann algebra sense).
\end{proposition}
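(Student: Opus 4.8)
The plan is to verify directly that for every $d\in D$ the completely positive map $a\mapsto d^\ast\pi_\phi(a)d$ is a nuclear map $A\to D$; by definition this says precisely that $\pi_\phi\colon A\to\multialg{D}$ is weakly nuclear, and the final sentence of the proposition is then immediate from Proposition~\ref{p:weaklynucmaps}(2). The map $\pi_\phi$ is not visibly nuclear, but the order‑zero factorisation $\phi=h\pi_\phi(-)$ will let us recover $\pi_\phi$ from the nuclear map $\phi$ once we know that $h$ behaves like an approximate unit for $C=C^\ast(\phi(A))$.

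For $0<\epsilon\le 1$ I would use the continuous functions $g_\epsilon,k_\epsilon\colon[0,\infty)\to[0,\infty)$ defined by $g_\epsilon(t)=1/t$ and $k_\epsilon(t)=tg_\epsilon(t)=1$ for $t\ge\epsilon$, and $g_\epsilon(t)=t/\epsilon^2$, $k_\epsilon(t)=t^2/\epsilon^2$ for $t\le\epsilon$. Since $h\in\multialg{C}\cap C'$, the elements $g_\epsilon(h),k_\epsilon(h)$ lie in $\multialg{C}$, commute with $C$ and with $\pi_\phi(A)$, satisfy $\|k_\epsilon(h)\|\le 1$, and $k_\epsilon(h)=hg_\epsilon(h)$. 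The first technical step is to show that $k_\epsilon(h)c\to c$ for every $c\in C$ as $\epsilon\to0$. Because $\phi(a)=h\pi_\phi(a)$, $\phi(a)^\ast=h\pi_\phi(a^\ast)$, and $h$ commutes with $\pi_\phi(A)$, any product of elements of $\phi(A)\cup\phi(A)^\ast$ has the form $h^m\pi_\phi(w)$ with $m\ge 1$ and $w\in A$; such products span a dense subspace of $C$, and for them
\[
\|k_\epsilon(h)h^m\pi_\phi(w)-h^m\pi_\phi(w)\|\le\|(k_\epsilon(h)-1)h^m\|\,\|\pi_\phi(w)\|\le\epsilon^m\|\pi_\phi(w)\|,
\]
since $\sup_{t\in[0,\epsilon]}|(t^2/\epsilon^2-1)t^m|\le\epsilon^m$. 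Uniform boundedness of $(k_\epsilon(h))_\epsilon$ then yields $k_\epsilon(h)c\to c$ for all $c\in C$.

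Next I would show that $a\mapsto c^\ast\pi_\phi(a)c$ is a nuclear map $A\to B$ for each $c\in C$. By the previous step it is the point‑norm limit as $\epsilon\to0$ of the maps $a\mapsto c^\ast k_\epsilon(h)\pi_\phi(a)k_\epsilon(h)c$, and since $k_\epsilon(h)=hg_\epsilon(h)$ commutes with $\pi_\phi(a)$ while $h\pi_\phi(a)=\phi(a)$, we have
\[
c^\ast k_\epsilon(h)\pi_\phi(a)k_\epsilon(h)c=c^\ast k_\epsilon(h)\,\phi(a)\,g_\epsilon(h)c=(k_\epsilon(h)c)^\ast\,\phi(a)\,(g_\epsilon(h)c).
\]
Here $k_\epsilon(h)c,g_\epsilon(h)c\in C\subseteq B$, and whenever $\psi\colon A\to B$ is nuclear and $b_1,b_2\in B$ the map $a\mapsto b_1^\ast\psi(a)b_2$ is again nuclear: reduce to $b_1=b_2$ by polarisation, then observe that conjugating the matrix‑algebra stage of a factorisation of $\psi$ by $b$ is completely positive. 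Hence each of the maps $a\mapsto(k_\epsilon(h)c)^\ast\phi(a)(g_\epsilon(h)c)$ is nuclear, and therefore so is their point‑norm limit $a\mapsto c^\ast\pi_\phi(a)c$.

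Finally, given $d\in D$, Cohen's factorisation theorem — applicable because an approximate unit for $C$ is one for the left $C$‑module $D=\overline{CD}$ — gives $d=ce$ with $c\in C$, $e\in D$. Then $a\mapsto d^\ast\pi_\phi(a)d=e^\ast(c^\ast\pi_\phi(a)c)e$ is the nuclear map $a\mapsto c^\ast\pi_\phi(a)c$ followed by the completely positive map $x\mapsto e^\ast xe$ on $B$, hence nuclear as a map $A\to B$; its range lies in $D$, since $c^\ast\pi_\phi(a)c\in C\subseteq D$ and $e^\ast De\subseteq D$, $D$ being the hereditary $C^\ast$‑subalgebra of $B$ generated by $C$. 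A nuclear map $A\to B$ whose range lies in a hereditary $C^\ast$‑subalgebra $D$ is nuclear as a map $A\to D$: by Proposition~\ref{p:weaklynucmaps}(1) it is weakly nuclear into $B^{\ast\ast}$, and compressing the approximating c.p.\ maps (which factor through matrix algebras) by the open projection $p$ with $D^{\ast\ast}=pB^{\ast\ast}p$, using weak$\ast$‑continuity of $x\mapsto pxp$, exhibits it as weakly nuclear into $D^{\ast\ast}$, so Proposition~\ref{p:weaklynucmaps}(1) applies once more. Thus $a\mapsto d^\ast\pi_\phi(a)d$ is nuclear $A\to D$ for every $d\in D$, as required. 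The step that needs the real idea is recovering $\pi_\phi$ from $\phi$ by conjugating with $k_\epsilon(h)$, which relies on $h$ being an approximate unit for $C$; the passage among the algebras $B$, $C$, $D$ at the end is routine bookkeeping that nonetheless has to be carried out.
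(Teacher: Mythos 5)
Your overall strategy is sound and genuinely different from the paper's. The paper observes that elements $d=\sum_i\phi(a_i)b_i$ are dense in $D$, computes $d^\ast\pi_\phi(x)d=\sum_{i,j}(h^{1/2}b_i)^\ast\phi(a_i^\ast xa_j)(h^{1/2}b_j)$ using $\phi(a)^\ast\pi_\phi(x)\phi(b)=h^{1/2}\phi(axb)h^{1/2}$, and then invokes Remark~\ref{r:nucopconcon}, which handles such two-sided sums in one stroke by realising them as a single compression $c^\ast\phi^{(n)}(r^\ast xr)c$ of the amplified map. You instead manufacture approximate inverses $g_\epsilon(h)$, $k_\epsilon(h)$ of $h$ by functional calculus, show $k_\epsilon(h)$ acts as an approximate unit on $C$, exhibit $c^\ast\pi_\phi(\cdot)c$ as a point-norm limit of compressions of $\phi$, and pass from $c\in C$ to $d\in D$ via Cohen factorisation. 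All of these steps are correct as stated, and the Cohen/hereditary bookkeeping at the end is fine.

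There is, however, one step whose justification does not work as written: the claim that for nuclear $\psi$ and arbitrary $b_1\neq b_2$ the map $a\mapsto b_1^\ast\psi(a)b_2$ is ``again nuclear,'' proved by polarisation. In this paper nuclearity is a property of c.p.\ maps, defined via approximation by c.p.\ maps factoring through matrix algebras; the map $b_1^\ast\psi(\cdot)b_2$ is in general not c.p., and polarisation only expresses it as a linear combination, with coefficients $i^k/4$, of nuclear c.p.\ maps. Even when such a combination happens to be c.p.\ (as yours does), the resulting approximants $\sum_k(i^k/4)(\text{c.p.\ through }M_{n_k})$ need not be c.p., so they do not witness nuclearity in the required sense; an extra argument would be needed. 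This is precisely the pitfall Remark~\ref{r:nucopconcon} is designed to avoid. Fortunately your specific map can be symmetrised: with $m_\epsilon(t):=tg_\epsilon(t)^2$ one has $k_\epsilon(h)^2\pi_\phi(a)=m_\epsilon(h)\phi(a)$, hence
\[
c^\ast k_\epsilon(h)\pi_\phi(a)k_\epsilon(h)c=\bigl(m_\epsilon(h)^{1/2}c\bigr)^\ast\,\phi(a)\,\bigl(m_\epsilon(h)^{1/2}c\bigr),
\]
with $m_\epsilon(h)^{1/2}c\in C\subseteq B$, and this is an honest one-sided compression of the nuclear map $\phi$, hence nuclear because post-composing a nuclear map with a c.p.\ map preserves nuclearity. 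With that replacement (or by citing Remark~\ref{r:nucopconcon} directly) your proof goes through; as written, the polarisation step is a genuine gap. (A minor further slip: one conjugates the $B$-stage of the factorisation by $b$, not the matrix-algebra stage.)
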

\begin{proof}
Note that $\phi = \pi_\phi(-) h = h \pi_\phi(-)$. Thus 
\[
\phi(a) \pi_\phi(x) \phi(b) = h \pi_\phi(a) \pi_\phi(x) \pi_\phi(b) h = h^{1/2} \pi_\phi(axb) h^{3/2} = h^{1/2} \phi(axb) h^{1/2}
\]
for $a,b,x\in A$.
Our goal is to show that the c.p.~maps $d^\ast \pi_\phi(-) d \colon A \to D$ are nuclear for all $d\in D$.
Since the set of nuclear maps is point-norm closed, it suffices to check this for elements $d$ of the form $d = \sum_{i=1}^n\phi(a_i) b_i$, with $a_i \in A$ and $b_i \in D$, since elements of this form are dense in $D$. Note that $h \in \multialg{C} \subseteq \multialg{D}$ and thus $h^{1/2} b_i \in D$. We get that
\begin{eqnarray*}
A \ni x \mapsto d^\ast\pi_\phi(x) d &=& \sum_{i,j=1}^n b_i^\ast (\phi(a_i)^\ast \pi_\phi(x) \phi(a_j)) b_j \\
&=& \sum_{i,j=1}^n b_i^\ast h^{1/2} \phi(a_i^\ast x a_j) h^{1/2} b_j \\
&=& \sum_{i,j=1}^n (h^{1/2}b_i)^\ast \phi(a_i^\ast x a_j) (h^{1/2}b_j)
\end{eqnarray*}
is nuclear (as a map into $B$) by Remark \ref{r:nucopconcon}, since $\phi$ is nuclear. 
Since $D$ is a hereditary $C^\ast$-subalgebra of $B$, it follows that the corestriction $d^\ast \pi_\phi(-) d \colon A \to D$ is nuclear for every $d\in D$.

The ``in particular'' part follows from Proposition \ref{p:weaklynucmaps}(2).
\end{proof}

\begin{lemma}
Let $A,B$ be $C^\ast$-algebras, $M$ a von Neumann algebra and let $\phi\colon A \to M$ and $\psi \colon B \to M$ be c.p.~maps with commuting images. Suppose that $A$ is nuclear and $\psi$ is weakly nuclear. Then the induced c.p.~map $\phi \times \psi \colon A \otimes B \to M$ is weakly nuclear.
\end{lemma}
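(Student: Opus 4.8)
The plan is to reduce the statement to the case where $A$ is a matrix algebra and then to exploit the commuting‑range hypothesis via a square‑root trick. Before anything else I would record that $\phi \times \psi$ really is a c.p.~map on $A \otimes B$: the commuting‑range hypothesis produces a c.p.~map on $A \otimes_{\mathrm{max}} B$, and $A \otimes_{\mathrm{max}} B = A \otimes B$ since $A$ is nuclear. Throughout I will use, more or less without comment, three elementary consequences of the definition of weak nuclearity (for a fixed von Neumann algebra $M$): the weakly nuclear maps into $M$ are closed under point‑weak$\ast$ limits; they are stable under pre‑composition with arbitrary c.p.~maps; and they are stable under post‑composition with normal c.p.~maps into another von Neumann algebra.

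For the reduction I would invoke the completely positive approximation property of the nuclear $C^\ast$‑algebra $A$: choose contractive c.p.~maps $\ftn{\alpha_i}{A}{M_{n_i}}$ and $\ftn{\beta_i}{M_{n_i}}{A}$ with $\beta_i \circ \alpha_i \to \mathrm{id}_A$ in point‑norm, and set $\gamma_i := \phi \circ \beta_i$. Since $\gamma_i(M_{n_i}) \subseteq \phi(A)$, it commutes with $\psi(B)$, so $\gamma_i \times \psi \colon M_{n_i} \otimes B \to M$ is well defined and c.p. The maps $(\gamma_i \times \psi) \circ (\alpha_i \otimes \mathrm{id}_B) \colon A \otimes B \to M$ are uniformly bounded (by $\|\phi\|\,\|\psi\|$) and converge to $\phi \times \psi$ in point‑norm, hence point‑weak$\ast$; by the closure properties above it then suffices to prove the lemma when $A = M_n$.

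For the matrix case, write $\phi = \gamma \colon M_n \to M$, let $C_\gamma := \big(\gamma(e_{kl})\big)_{k,l} \in M_n(M)$ (positive, by complete positivity of $\gamma$), and let $S = (s_{kl}) \in M_n(M)$ be the positive square root of $C_\gamma$. Because each $\gamma(e_{kl})$ commutes with $\psi(B)$, the matrix $C_\gamma$ commutes with $1_n \otimes \psi(b)$ for every $b \in B$, and therefore so does $S$ — equivalently, every entry $s_{kl}$ commutes with $\psi(B)$. From $S = S^\ast$ and $S^2 = C_\gamma$ one gets $\gamma(e_{kl}) = \sum_p s_{pk}^\ast s_{pl}$, and then, sliding each $s_{pl}$ past $\psi(b_{kl})$, I expect the identity, for $(b_{kl}) \in M_n \otimes B = M_n(B)$,
\[
(\gamma \times \psi)\big((b_{kl})\big) \;=\; \sum_{k,l} \gamma(e_{kl})\, \psi(b_{kl}) \;=\; \sum_{p} \sum_{k,l} s_{pk}^\ast\, \psi(b_{kl})\, s_{pl} \;=\; \sum_{p} r_p^\ast\, (\mathrm{id}_{M_n} \otimes \psi)\big((b_{kl})\big)\, r_p ,
\]
where $r_p := (s_{p1}, \dots, s_{pn})^t \in M_{n,1}(M)$. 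That is, $\gamma \times \psi = E \circ (\mathrm{id}_{M_n} \otimes \psi)$, where $E \colon M_n(M) \to M$, $E(X) := \sum_p r_p^\ast X r_p$, is a normal c.p.~map. Since $\psi$ is weakly nuclear into $M$, the amplification $\mathrm{id}_{M_n} \otimes \psi$ is weakly nuclear into $M_n(M)$ (amplify the matrix‑factoring approximants of $\psi$), and post‑composing with the normal c.p.~map $E$ shows $\gamma \times \psi$ is weakly nuclear, finishing the argument.

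The step I expect to be the real obstacle is precisely the commuting‑range constraint in the matrix case. The naive strategy — approximate $\psi$ by a map $\tau \circ \sigma$ factoring through a matrix algebra and form $\gamma \times (\tau \circ \sigma)$ — simply fails, because $\tau(M_m)$ need not commute with $\gamma(M_n)$, so that product is not even defined (one would instead be forced to argue that $\psi$ is weakly nuclear as a map into the von Neumann algebra it generates, which is a genuinely more delicate point). The square‑root factorisation above is what avoids this: it leaves $\psi$ untouched, so its weak nuclearity is available verbatim, and pushes all of $\gamma$ into the single fixed normal c.p.~map $E$ on $M_n(M)$. Everything else is routine: checking that $E$ is normal and completely positive, and the three stability properties of weak nuclearity used along the way.
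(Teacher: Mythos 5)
Your proof is correct, but it takes a genuinely different route from the one in the paper. The paper's argument is a two-line application of the commutant characterisation of weak nuclearity (\cite[Theorem 3.8.5]{BrownOzawa-book-approx}): represent $M\subseteq B(H)$, use that characterisation to convert weak nuclearity of $\psi$ into min-continuity of $\psi\times\iota_{M'}$ on $B\otimes M'$, form $\phi\times(\psi\times\iota_{M'})$, identify $A\otimes(B\otimes M')$ with $(A\otimes B)\otimes M'$ (this is where nuclearity of $A$ enters), and read off weak nuclearity of $\phi\times\psi$ from the same characterisation. You instead use nuclearity of $A$ through the completely positive approximation property to reduce to $A=M_n$, and then handle the matrix case by an explicit factorisation $\gamma\times\psi=E\circ(\mathrm{id}_{M_n}\otimes\psi)$ obtained from the positive square root of the Choi matrix of $\gamma$, whose entries still commute with $\psi(B)$. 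Both arguments are sound; yours is longer but more elementary and self-contained, avoiding the Effros--Lance/Kirchberg-type equivalence behind \cite[Theorem 3.8.5]{BrownOzawa-book-approx}, and it has the pleasant side effect that the identity $\gamma\times\psi=E\circ(\mathrm{id}_{M_n}\otimes\psi)$, together with the point-norm limit in your reduction step, independently re-proves that $\phi\times\psi$ is completely positive on $A\otimes B$ (a fact both proofs otherwise take as given). The three stability properties of weak nuclearity you invoke (point-weak$\ast$ closure, pre-composition with c.p.\ maps, post-composition with normal c.p.\ maps) and the weak nuclearity of the amplification $\mathrm{id}_{M_n}\otimes\psi$ are all routine to verify from the definition, as you say, so I see no gap.
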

\begin{proof}
Represent $M \subseteq B(H)$. By \cite[Theorem 3.8.5]{BrownOzawa-book-approx}, $\psi \times \iota_{M'}\colon B \otimes_{\max{}} M' \to B(H)$ factors through $B \otimes M'$. The result in \cite{BrownOzawa-book-approx} only considers the case of unital c.p.~maps, but the general case can be obtained by normalising and unitising the map.

When identifying $A \otimes (B \otimes M')$ with $(A \otimes B) \otimes M'$, the c.p.~map $\phi \times (\psi \times \iota_{M'})$ becomes a c.p.~extension of the linear map
\[
(\phi \times \psi) \times_{\mathrm{alg}} \iota_{M'} \colon (A \otimes B) \otimes_{\mathrm{alg}} M' \to B(H).
\]
Thus, by \cite[Theorem 3.8.5]{BrownOzawa-book-approx}, $\phi \times \psi \colon A \otimes B \to M$ is weakly nuclear.
\end{proof}

The following corollary shows that the c.p.~maps that we construct from nuclear maps remain nuclear.

\begin{corollary}\label{c:inducednuc}
Let $A$, $B$ and $D$ be $C^\ast$-algebras, $\phi \colon A \to D^{\ast \ast}$ and $\psi \colon B \to D^{\ast \ast}$ be c.p.~maps with commuting images such that $\phi(A) \psi(B) \subseteq D$. If $A$ is nuclear and $\psi$ is weakly nuclear, then $\phi \times \psi \colon A \otimes B \to D$ is nuclear.
\end{corollary}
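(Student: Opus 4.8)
The plan is to combine the preceding (unnumbered) lemma with Proposition~\ref{p:weaklynucmaps}(1); essentially all the work has already been done. First I would apply the lemma with $M = D^{\ast\ast}$: since $\phi$ and $\psi$ have commuting images in $D^{\ast\ast}$, the domain $A$ is nuclear, and $\psi$ is weakly nuclear, the lemma produces a weakly nuclear (in the von Neumann algebra sense) c.p.~map $\phi \times \psi \colon A \otimes B \to D^{\ast\ast}$. Note that nuclearity of $A$ makes the $C^\ast$-tensor product unambiguous, so $A \otimes B$ needs no decoration.

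Next I would check that the image of $\phi \times \psi$ in fact lands in $D$, not merely in $D^{\ast\ast}$. On the algebraic tensor product the map is determined by $a \otimes b \mapsto \phi(a)\psi(b)$, so
\[
(\phi \times \psi)(A \otimes_{\mathrm{alg}} B) \subseteq \mathrm{span}\{\phi(a)\psi(b) : a \in A,\ b\in B\} \subseteq D
\]
by hypothesis; since $\phi \times \psi$ is bounded (being c.p.) and $D$ is norm-closed in $D^{\ast\ast}$, it follows that $(\phi \times \psi)(A \otimes B) \subseteq D$. Hence we may regard $\phi \times \psi$ as a c.p.~map $A \otimes B \to D$ which, viewed as a map into $D \subseteq D^{\ast\ast}$, is weakly nuclear.

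Finally, Proposition~\ref{p:weaklynucmaps}(1) says precisely that a c.p.~map $A \otimes B \to D$ is nuclear if and only if it is weakly nuclear as a map into $D^{\ast\ast}$; applying it to $\phi \times \psi$ gives the assertion. I do not anticipate any genuine obstacle here: the only mild point requiring care is the legitimacy of corestricting to $D$ (handled by the boundedness/closedness observation above), and matching the conclusion of the lemma — weak nuclearity into $D^{\ast\ast}$ — to the hypothesis of Proposition~\ref{p:weaklynucmaps}(1).
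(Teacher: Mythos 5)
Your argument is correct and follows exactly the paper's route: apply the preceding lemma with $M = D^{\ast\ast}$ to get weak nuclearity of $\phi\times\psi$, observe that the image lies in $D$, and conclude via Proposition~\ref{p:weaklynucmaps}(1). The only difference is that you spell out the corestriction to $D$, which the paper leaves implicit.
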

\begin{proof}
By the above lemma, there is an induced weakly nuclear map $\phi \times \psi \colon A \otimes B \to D \subseteq D^{\ast \ast}$. Thus the result follows from Proposition \ref{p:weaklynucmaps}(1).
\end{proof}

\begin{remark}\label{r:onetoone}
Recall, by \cite[Corollary 3.1]{WinterZacharias-orderzero}, that there is a one-to-one correspondence between contractive order zero maps $A \to B$ and $\ast$-homomorphisms $C_0(0,1] \otimes A \to B$. 
The bijection is given as follows: if $\rho \colon C_0(0,1] \otimes A \to B$ is a $\ast$-homomorphism, then the induced contractive order zero map, is the composition
\[
A \xrightarrow{f\otimes id_A} C_0(0,1] \otimes A \xrightarrow{\rho} B,
\]
where $f(t)=t$. 

If $\phi \colon A \to B$ is a contractive order zero map, then let $\pi_\phi \colon A \to \multialg{D}$ and $h\in \multialg{D}$ be as in Remark \ref{r:dilationher}. Then $h$ is a positive contraction so there is a canonical $\ast$-homomorphism $\overline \rho \colon C_0(0,1] \to C^\ast(h)\subseteq \multialg{D}$ whose image commutes with the image of $\pi_\phi$. Thus there is a $\ast$-homomorphism $\rho_\phi := \overline \rho \times \pi_\phi \colon C_0(0,1] \otimes A \to \multialg{D}$ and it turns out that this map factors through $D\subseteq B$.
\end{remark}

\begin{theorem}\label{t:nuconetoone}
Let $A$ and $B$ be $C^\ast$-algebras. The one-to-one correspondence between contractive order zero maps $A\to B$ and $\ast$-homomorphism $C_0(0,1] \otimes A \to B$ from \cite{WinterZacharias-orderzero} restricts to a one-to-one correspondence between nuclear contractive order zero maps and nuclear $\ast$-homomorphisms.
\end{theorem}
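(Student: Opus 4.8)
The plan is to treat the two directions of the Winter--Zacharias correspondence separately: the passage from a nuclear $\ast$-homomorphism to its order zero map is essentially formal, while the reverse passage is exactly what the preceding subsection was designed to handle. For the easy direction, suppose $\rho\colon C_0(0,1]\otimes A\to B$ is a nuclear $\ast$-homomorphism. By Remark \ref{r:onetoone} the associated contractive order zero map is $\rho\circ(f\otimes\mathrm{id}_A)$, where $f(t)=t$ and $f\otimes\mathrm{id}_A\colon A\to C_0(0,1]\otimes A$, $a\mapsto f\otimes a$, is a contractive c.p.\ map (conjugate $a\mapsto 1\otimes a$ by $f^{1/2}\otimes 1$). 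Since the class of nuclear maps is point-norm closed and stable under composition with c.p.\ maps, this order zero map is nuclear.

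For the converse, let $\phi\colon A\to B$ be a nuclear contractive order zero map and adopt the notation of Remark \ref{r:onetoone}: $C=C^\ast(\phi(A))$, $D=\overline{CBC}$, $h\in\multialg{C}\cap C'$ a positive contraction, $\pi_\phi\colon A\to\multialg{C}\cap\{h\}'\subseteq\multialg{D}$ the dilating $\ast$-homomorphism with $h\pi_\phi(-)=\phi(-)$, $\overline\rho\colon C_0(0,1]\to C^\ast(h)\subseteq\multialg{D}$ the canonical $\ast$-homomorphism, and $\rho_\phi=\overline\rho\times\pi_\phi\colon C_0(0,1]\otimes A\to\multialg{D}$, which by Remark \ref{r:onetoone} factors through $D$. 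I would then observe that this data fits the hypotheses of Corollary \ref{c:inducednuc}, with $C_0(0,1]$ in the role of the nuclear factor, $A$ in the role of the other factor, $D$ the target, $\overline\rho\colon C_0(0,1]\to\multialg{D}\subseteq D^{\ast\ast}$ and $\pi_\phi\colon A\to\multialg{D}\subseteq D^{\ast\ast}$ the two c.p.\ maps: indeed $C_0(0,1]$ is nuclear, $\pi_\phi$ is weakly nuclear by Proposition \ref{p:orderzerodilation}, the images of $\overline\rho$ and $\pi_\phi$ commute by construction, and $\overline\rho(C_0(0,1])\,\pi_\phi(A)\subseteq D$ since $\rho_\phi$ factors through $D$ (alternatively, for a polynomial $p$ with $p(0)=0$ one has $p(h)\pi_\phi(a)\in C\subseteq D$ because $h\in\multialg{C}$ and $h\pi_\phi(a)=\phi(a)\in C$, and one passes to the norm limit). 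Corollary \ref{c:inducednuc} then yields that $\rho_\phi\colon C_0(0,1]\otimes A\to D$ is nuclear, and composing with the inclusion $D\hookrightarrow B$ (a $\ast$-homomorphism, hence c.p.) gives that $\rho_\phi\colon C_0(0,1]\otimes A\to B$ is nuclear.

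Since the correspondence of \cite{WinterZacharias-orderzero} is already a bijection, these two observations show that it restricts to a bijection between nuclear contractive order zero maps $A\to B$ and nuclear $\ast$-homomorphisms $C_0(0,1]\otimes A\to B$, which is the assertion. I do not expect a real obstacle here: the substantive content, namely that the dilation $\pi_\phi$ of a nuclear order zero map is weakly nuclear, has already been isolated in Proposition \ref{p:orderzerodilation}, and the remaining work is just the bookkeeping of matching $\overline\rho$, $\pi_\phi$ and $D$ to the hypotheses of Corollary \ref{c:inducednuc} and checking the containment $\overline\rho(C_0(0,1])\,\pi_\phi(A)\subseteq D$.
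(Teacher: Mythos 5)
Your proposal is correct and follows essentially the same route as the paper: the easy direction by composing a nuclear $\ast$-homomorphism with the c.p.\ map $a\mapsto f\otimes a$, and the substantive direction by combining Remark \ref{r:onetoone}, the weak nuclearity of $\pi_\phi$ from Proposition \ref{p:orderzerodilation}, and Corollary \ref{c:inducednuc} applied to $\overline\rho\times\pi_\phi$. Your explicit verification of the containment $\overline\rho(C_0(0,1])\,\pi_\phi(A)\subseteq D$ is a detail the paper leaves implicit in the statement that $\rho_\phi$ factors through $D$, but it is the same argument.
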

\begin{proof}
Clearly any nuclear $\ast$-homomorphism $C_0(0,1] \otimes A \to B$ induces a nuclear c.p.c.~order zero map. So we should prove that for any nuclear contractive order zero map $\phi$, the induced $\ast$-homomorphism $\rho_\phi \colon C_0(0,1] \otimes A \to B$ is nuclear. Let $D,h$ and $\pi_\phi \colon A \to \multialg{D} \subseteq D^{\ast \ast}$ be given as in Remark \ref{r:dilationher}. By Proposition \ref{p:orderzerodilation}, $\pi_\phi$ is weakly nuclear. Let $\overline \rho \colon C_0(0,1] \to C^\ast(h) \subseteq D^{\ast \ast}$ be the canonical $\ast$-homomorphism. Then $\rho_\phi = \overline \rho \times \pi_\phi \colon C_0(0,1] \otimes A \to D^{\ast \ast}$ factors through $D$. Since $C_0(0,1]$ is nuclear, Corollary \ref{c:inducednuc} implies $\rho_\phi \colon C_0(0,1] \otimes A \to D \subseteq B$ is nuclear.
\end{proof}

\begin{remark}
It seems tempting to attempt to prove Theorem \ref{t:nuconetoone} by using the following method: if $\phi\colon A \to B$ is a nuclear, contractive order zero map, then the induced $\ast$-homomorphism $\rho_\phi \colon C_0(0,1] \otimes A \to B$ is given by extending the assignment $\rho_\phi(f^n \otimes a) = \phi(a^{1/n})^n$ linearly for $a\in A_+$, $n\in \mathbb N$, where $f(t) = t$, and extending this to $\rho_\phi$ by continuity. Since $\phi$ factors point-norm by c.p.~maps through matrices, it seems that the same should be true for $\rho_\phi$, by the above construction. However, it is not obvious that we get induced maps going from $C_0(0,1]\otimes A$ into matrix algebras nor that these would be completely positive. The same goes for the maps from matrices and into $B$. The author does not know if this approach could work.
\end{remark}


\section{Quasidiagonal traces on exact $C^\ast$-algebras}

Let $\mathcal Q$ be the universal UHF algebra and let $\tau_{\mathcal Q}$ denote its unique tracial state. We let $\ell^\infty(\mathcal Q)= \prod_{\mathbb N} \mathcal Q$ denote the $C^\ast$-algebra of bounded sequences in $\mathcal Q$. We will use the following result, which follows immediately from \cite[Lemma 3.3]{Dadarlat-qdmorphisms}\footnote{One should of course note, that Dadarlat assumes in \cite{Dadarlat-qdmorphisms} that all nuclear maps are contractive, but we may simply normalise our c.p.~maps and use his result.}.

\begin{proposition}\label{p:allnuc}
Let $A$ be an exact $C^\ast$-algebra. Then any c.p.~map $\phi \colon A \to \ell^{\infty}(\mathcal Q)$ is nuclear.
\end{proposition}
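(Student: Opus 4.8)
The plan is to reduce the statement to a citable result of Dadarlat. The key structural fact I would invoke is that $\mathcal{Q}$, being a UHF algebra, is nuclear, so that the constant sequence algebra $\ell^\infty(\mathcal{Q})$ sits between $\mathcal{Q}$ and its ultrapowers/products, and Dadarlat's \cite[Lemma 3.3]{Dadarlat-qdmorphisms} asserts precisely that when $A$ is exact, every completely positive \emph{contractive} map $A \to \ell^\infty(\mathcal{Q})$ (or more generally into such a product of nuclear algebras) is automatically nuclear. Roughly, the mechanism is that exactness of $A$ lets one approximate the map locally through a finite-dimensional operator system, and nuclearity of each $\mathcal{Q}$ upgrades this to an honest factorisation through matrix algebras, with the product structure allowing one to assemble the local data coherently.

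First I would recall Dadarlat's hypothesis: his lemma is stated for contractive c.p.~maps, in keeping with the convention (flagged in the paper's footnote) that nuclear maps be taken contractive there. So the one genuine point to address is the normalisation. Given an arbitrary c.p.~map $\phi \colon A \to \ell^\infty(\mathcal{Q})$, it is bounded, say $\|\phi\| \le \lambda$ for some $\lambda > 0$; then $\lambda^{-1}\phi$ is a c.p.~contraction, hence nuclear by Dadarlat's lemma, and therefore $\phi = \lambda(\lambda^{-1}\phi)$ is nuclear since scaling a nuclear map by a positive scalar preserves nuclearity (composing with the c.p.~map ``multiplication by $\lambda$'' on $\ell^\infty(\mathcal{Q})$, which is just a special case of the observation following the definition of nuclear map that composing a nuclear map with a c.p.~map stays nuclear). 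One should also note that Dadarlat's lemma delivers nuclearity in the sense of approximation by c.p.~maps through matrix algebras that are themselves contractive; but by \cite[Lemma 2.3]{GabeRuiz-absrep}, cited in the excerpt, this agrees with the paper's (non-contractive) definition for contractive maps, so there is no discrepancy.

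I expect the proof to be essentially a one-line citation plus the normalisation remark, so there is no serious obstacle here — the real content lives in \cite{Dadarlat-qdmorphisms}. If I were not permitted to cite that lemma, the main difficulty would be proving it from scratch: one would need to use exactness of $A$ to obtain, for each finite set $F \subseteq A$ and $\varepsilon > 0$, a finite-dimensional operator system $E$ and c.p.~contractions $A \to E \to B(H)$ approximating the inclusion on $F$, then feed the composite through the unique tracial/nuclear structure of $\mathcal{Q}$ in each coordinate and patch the coordinatewise approximations together — the subtlety being to make the patching uniform enough that a single c.p.~map $A \to M_k \to \ell^\infty(\mathcal{Q})$ works simultaneously in all coordinates, which is exactly what the structure of $\mathcal{Q}$ as a \emph{single fixed} UHF algebra (rather than a varying sequence) buys you. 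But since the excerpt explicitly allows use of Proposition~\ref{p:allnuc}'s source, the proof as written should simply invoke \cite[Lemma 3.3]{Dadarlat-qdmorphisms} after the normalisation step above.
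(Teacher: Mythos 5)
Your proposal is correct and matches the paper's treatment exactly: the paper states the proposition as following immediately from \cite[Lemma 3.3]{Dadarlat-qdmorphisms}, with a footnote making precisely your normalisation point (rescale an arbitrary c.p.~map to a contraction, apply Dadarlat's lemma, then rescale back, which preserves nuclearity since it is composition with a c.p.~map). No further comment is needed.
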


We will always refer to tracial states as simply traces.
Recall, (e.g.~\cite{BrownOzawa-book-approx}) that a trace $\tau$ on a $C^\ast$-algebra $A$ is \emph{amenable} if and only if there is a net of contractive c.p.~maps $\phi_\alpha \colon A \to M_{k(\alpha)}$ such that
\begin{itemize}
\item[$(1)$] $\tau(a) = \lim_{\alpha} \mathrm{tr} \circ \phi_\alpha(a)$ for all $a\in A$,
\item[$(2am)$] $\lim_{\alpha}\| \phi_\alpha(ab) - \phi_\alpha(a) \phi_\alpha(b) \|_{2,\mathrm{tr}}=0$ for all $a,b\in A$.
\end{itemize}
We say that a trace $\tau$ is \emph{quasidiagonal} if there is a net as above satisfying $(1)$ and
\begin{itemize}
\item[$(2qd)$] $\lim_{\alpha}\| \phi_\alpha(ab) - \phi_\alpha(a) \phi_\alpha(b) \|=0$ for all $a,b\in A$.
\end{itemize}

We say that a $C^\ast$-algebra $A$ is \emph{quasidiagonal} if there is a net $\phi_\alpha\colon A \to M_{k(\alpha)}$ of contractive c.p.~maps such that
\begin{itemize}
\item[$(1')$] $\| a \| = \lim_{\alpha} \| \phi_\alpha(a)\|$ for all $a\in A$,
\item[$(2')$] $\lim_{\alpha} \| \phi_\alpha(ab) - \phi_{\alpha}(a) \phi_\alpha(b)\| = 0$, for all $a,b \in A$.
\end{itemize}

If $A$ is unital, then all the c.p.~maps in the above definitions may be taken to be unital. In fact, we will very often use the following proposition to restrict to the unital case.

\begin{proposition}[\cite{Brown-invariantmeans}, Proposition 3.5.10]\label{p:unitaltrace}
If $A$ is a non-unital $C^\ast$-algebra and $\tau_A$ is a trace on $A$, then $\tau_A$ extends uniquely to a trace $\tilde \tau_A$ on the unitisation of $A$.

Moreover, $\tau_A$ is amenable (resp.~quasidiagonal) if and only if $\tilde \tau_A$ is amenable (resp.~quasidiagonal).
\end{proposition}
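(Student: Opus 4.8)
The plan is to write the extension $\tilde\tau_A$ down explicitly on the unitisation $\tilde A$ and then to transfer the approximating systems of completely positive maps between $A$ and $\tilde A$ in both directions, by unitising in one direction and restricting in the other.

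\textbf{Existence and uniqueness.} First I would define $\tilde\tau_A\colon \tilde A \to \mathbb C$ by $\tilde\tau_A(a+\lambda 1) = \tau_A(a) + \lambda$; this is well defined because, $A$ being non-unital, the decomposition $a + \lambda 1$ with $a\in A$, $\lambda\in\mathbb C$ is unique. Traciality is immediate after expanding $(a+\lambda 1)(b+\mu 1)$: the cross terms are symmetric and $\tau_A(ab)=\tau_A(ba)$. The one point needing an argument is positivity. If $a+\lambda 1 \ge 0$ in $\tilde A$ then $\lambda\ge 0$ (apply the canonical quotient $\tilde A\to\mathbb C$), and for an approximate unit $(e_i)$ of $A$ the element $e_i(a+\lambda 1)e_i = e_i a e_i + \lambda e_i^2$ lies in $A$ and is positive, so $\tau_A(e_i a e_i) + \lambda\,\tau_A(e_i^2)\ge 0$; letting $i\to\infty$ and using $e_i^2 a\to a$, $\tau_A(e_i^2)\to \|\tau_A\| = 1$ and continuity of $\tau_A$ yields $\tau_A(a)+\lambda\ge 0$. (Equivalently, $\tilde\tau_A$ is the restriction to $\tilde A\subseteq\multialg{A}\subseteq A^{\ast\ast}$ of the normal extension of $\tau_A$ to $A^{\ast\ast}$, which is a normal tracial state.) For uniqueness, any trace (i.e.\ tracial state) $\sigma$ on $\tilde A$ with $\sigma|_A = \tau_A$ is in particular a state on the unital algebra $\tilde A$, so $\sigma(1) = \|\sigma\| = 1 = \tilde\tau_A(1)$, which forces $\sigma = \tilde\tau_A$.

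\textbf{Transfer of the approximating systems.} For the forward implication, suppose $\tau_A$ is amenable (resp.\ quasidiagonal), witnessed by c.p.c.\ maps $\phi_\alpha\colon A\to M_{k(\alpha)}$ satisfying $(1)$ and $(2am)$ (resp.\ $(2qd)$). Since each $\phi_\alpha$ is c.p.c.\ and $M_{k(\alpha)}$ is unital, its unitisation $\tilde\phi_\alpha\colon \tilde A\to M_{k(\alpha)}$, $\tilde\phi_\alpha(a+\lambda 1) = \phi_\alpha(a)+\lambda 1$, is a unital (hence contractive) c.p.\ map. A direct computation shows that for $x = a+\lambda 1$ and $y = b+\mu 1$ one has $\tilde\phi_\alpha(xy) - \tilde\phi_\alpha(x)\tilde\phi_\alpha(y) = \phi_\alpha(ab) - \phi_\alpha(a)\phi_\alpha(b)$, the scalar and cross terms cancelling, so conditions $(2am)$ and $(2qd)$ pass verbatim from $(\phi_\alpha)$ to $(\tilde\phi_\alpha)$; and $\mathrm{tr}\circ\tilde\phi_\alpha(a+\lambda 1) = \mathrm{tr}\circ\phi_\alpha(a)+\lambda \to \tau_A(a)+\lambda = \tilde\tau_A(a+\lambda 1)$ gives $(1)$. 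Conversely, if $\tilde\tau_A$ is amenable (resp.\ quasidiagonal), witnessed by c.p.c.\ maps $\psi_\alpha\colon \tilde A\to M_{k(\alpha)}$, then the restrictions $\psi_\alpha|_A$ are c.p.c.\ and conditions $(1)$, $(2am)$, $(2qd)$ for $\tau_A$ are special cases, for elements of $A\subseteq\tilde A$, of those for $\tilde\tau_A$.

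\textbf{Main obstacle.} Honestly none of the steps is deep; the statement is essentially bookkeeping with the definitions. The two places that require genuine (though standard) input are the positivity of $\tilde\tau_A$ — which I would handle either by the approximate-unit estimate above or by restricting the normal extension on $A^{\ast\ast}$ — and the standard fact that the unitisation of a completely positive contractive map is again a (unital, hence contractive) completely positive map, which is what makes the maps $\tilde\phi_\alpha$ legitimate witnesses. I expect the write-up to be short once those two facts are invoked.
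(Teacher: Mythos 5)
Your argument is correct, and it is the standard bookkeeping proof of this fact: the paper itself gives no proof but simply cites Brown's Proposition 3.5.10, whose argument proceeds exactly as yours does (explicit formula for $\tilde\tau_A$, unitisation of the c.p.c.\ maps one way, restriction the other). The only points that needed care --- positivity of $\tilde\tau_A$ via an approximate unit (or the normal extension to $A^{\ast\ast}$), and the fact that the unitisation of a c.p.c.\ map into a unital $C^\ast$-algebra is u.c.p.\ --- are both handled correctly, so nothing is missing.
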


Let $\omega$ denote a free ultrafilter. We define
\[
\mathcal Q_\omega := \ell^\infty(\mathcal Q) / \{ (x_n) \in \ell^\infty(\mathcal Q) : \lim_{n \to \omega} \| x_n \| = 0\}.
\]
Let $q \colon \ell^\infty(\mathcal Q) \to \mathcal Q_\omega$ be the quotient map.
We get an induced trace $\tau_{\mathcal Q_\omega}$ on $\mathcal Q_\omega$ given by
\[
\tau_{\mathcal Q_\omega} (q(a_1,a_2,\dots)) = \lim_{n \to \omega} \tau_{\mathcal Q}(a_n). 
\]
The following proposition generalises the result \cite[Proposition 1.4]{TikuisisWhiteWinter-QDnuc}, and the proof is basically the same.

\begin{proposition}\label{p:qdcharac}
Let $A$ be a separable, unital and exact $C^\ast$-algebra.
\begin{itemize}
\item[$(i)$] Then $A$ is quasidiagonal if and only if there exists a unital, nuclear embedding $A \hookrightarrow \mathcal Q_\omega$.
\item[$(ii)$] For any trace $\tau_A$ on $A$, the following are equivalent:
\begin{itemize}
\item[$(a)$] $\tau_A$ is quasidiagonal,
\item[$(b)$] there exists a unital, nuclear $\ast$-homomorphism $\theta \colon A \to \mathcal Q_\omega$ such that $\tau_{\mathcal Q_\omega} \circ \theta = \tau_A$,
\item[$(c)$] there exists $\gamma \in (0,1]$, such that for every finite finite subset $F \subset A$ and $\epsilon >0$ there exists a nuclear map $\phi \colon A \to \mathcal Q_\omega$ such that
\begin{eqnarray*}
 \|\phi(ab) - \phi(a) \phi(b)\| &<& \epsilon ,  \quad a,b \in F, \textrm{ and} \\
 \tau_{\mathcal Q_\omega} \circ \phi(a) &=& \gamma \tau_A(a), \quad a\in F. 
\end{eqnarray*} 
\end{itemize}
\end{itemize}
In particular, if $A$ has a faithful quasidiagonal trace, then $A$ is quasidiagonal.
\end{proposition}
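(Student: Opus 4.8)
The strategy is to follow the proof of \cite[Proposition~1.4]{TikuisisWhiteWinter-QDnuc} almost verbatim, the only change being that, wherever nuclearity of $A$ is used there to conclude that a c.p.\ map into $\mathcal{Q}$, $\ell^\infty(\mathcal{Q})$ or $\mathcal{Q}_\omega$ is nuclear, we instead combine exactness of $A$ with Proposition~\ref{p:allnuc}. Since $A$ is unital, all the witnessing c.p.\ maps in the definitions of quasidiagonality may be taken unital. To prove (a)$\Rightarrow$(b) of (ii) and the forward direction of (i): starting from a net as in $(1)$ and $(2qd)$ (resp.\ $(1')$ and $(2')$) and using separability of $A$, extract a \emph{sequence} of unital c.p.\ maps $\phi_n\colon A\to M_{k(n)}$ with the same asymptotic behaviour; compose with unital embeddings $M_{k(n)}\hookrightarrow\mathcal{Q}$ (available since $\mathcal{Q}$ is the universal UHF algebra, and under which $\tau_{\mathcal{Q}}$ restricts to the tracial state of $M_{k(n)}$) and assemble to a unital c.p.\ map $\Phi\colon A\to\ell^\infty(\mathcal{Q})$. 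By Proposition~\ref{p:allnuc} this $\Phi$ is nuclear, hence so is $\theta:=q\circ\Phi\colon A\to\mathcal{Q}_\omega$; asymptotic multiplicativity \emph{in operator norm} makes $\theta$ a $\ast$-homomorphism, unitality of the $\phi_n$ makes it unital, and condition $(1)$ (resp.\ $(1')$) gives $\tau_{\mathcal{Q}_\omega}\circ\theta=\tau_A$ (resp.\ that $\theta$ is isometric). The implication (b)$\Rightarrow$(c) of (ii) is immediate, with $\phi=\theta$ and $\gamma=1$.

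For the reverse direction of (i) and (b)$\Rightarrow$(a) of (ii), let $\theta\colon A\to\mathcal{Q}_\omega$ be a unital nuclear $\ast$-homomorphism. Given a finite set $F\subset A$, which we may take closed under multiplication up to a small error, and $\epsilon>0$, nuclearity of $\theta$ provides an approximate factorization $A\xrightarrow{\eta}M_r\xrightarrow{\kappa}\mathcal{Q}_\omega$ with $\|\kappa\eta(a)-\theta(a)\|<\epsilon$ on $F$; after routine modifications $\eta$ and $\kappa$ may be taken unital c.p. Since $M_r$ is finite-dimensional, the Choi--Effros lifting theorem lifts $\kappa$ to a unital c.p.\ map $(\kappa_m)_m\colon M_r\to\ell^\infty(\mathcal{Q})$, and then the $\kappa_m\circ\eta\colon A\to\mathcal{Q}$ are unital c.p.\ maps whose multiplicativity defect on $F$, whose distance from being isometric on $F$, and whose trace error $|\mathrm{tr}\circ\kappa_m\eta(a)-\tau_A(a)|$ on $F$ all converge, along $\omega$, to quantities bounded in terms of $\epsilon$ --- using that $\theta$ is an isometric $\ast$-homomorphism, resp.\ satisfies $\tau_{\mathcal{Q}_\omega}\circ\theta=\tau_A$, and that $q$ is trace-preserving. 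Composing with trace-preserving conditional expectations $\mathcal{Q}\to M_d$, which converge to the identity in the point-norm topology, moves the codomain into a matrix algebra at the cost of another $\epsilon$. A diagonal argument over an increasing exhaustion of $A$ by finite sets then produces a net of unital c.p.\ maps $A\to M_{d(n)}$ witnessing quasidiagonality of $A$ (resp.\ of $\tau_A$).

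For (c)$\Rightarrow$(b), fix $\gamma$ as in (c) and run the reindexing of the previous paragraph in reverse: approximately factor each given nuclear map $\phi$ through a matrix algebra, lift the leg into $\mathcal{Q}_\omega$ by Choi--Effros, and diagonalize over an exhaustion of $A$ to obtain a single nuclear $\ast$-homomorphism $\psi\colon A\to\mathcal{Q}_\omega$ with $p:=\psi(1)$ a projection satisfying $\tau_{\mathcal{Q}_\omega}(p)=\gamma$ and $\tau_{\mathcal{Q}_\omega}\circ\psi=\gamma\,\tau_A$. If $\gamma=1$ this is already (b). Otherwise, corestricting $\psi$ to the hereditary subalgebra $p\mathcal{Q}_\omega p$ preserves nuclearity (as in the proof of Proposition~\ref{p:orderzerodilation}), and there is an isomorphism $p\mathcal{Q}_\omega p\cong\mathcal{Q}_\omega$ carrying $\gamma^{-1}\tau_{\mathcal{Q}_\omega}|_{p\mathcal{Q}_\omega p}$ onto $\tau_{\mathcal{Q}_\omega}$, obtained by lifting $p$ to a sequence of projections in $\mathcal{Q}$ and assembling the trace-scaling isomorphisms of the corresponding corners of $\mathcal{Q}$; composing $\psi$ with this isomorphism, which, being a $\ast$-homomorphism, preserves nuclearity, yields a unital nuclear $\ast$-homomorphism $A\to\mathcal{Q}_\omega$ pulling $\tau_{\mathcal{Q}_\omega}$ back to $\tau_A$, that is, (b). Finally, for the last assertion: a faithful quasidiagonal trace $\tau_A$ yields, via (a)$\Rightarrow$(b), a unital nuclear $\ast$-homomorphism $\theta\colon A\to\mathcal{Q}_\omega$ with $\tau_{\mathcal{Q}_\omega}\circ\theta=\tau_A$, which is injective because $\tau_A$ is faithful, so $A$ is quasidiagonal by (i).

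I expect the two reindexing steps to be the main obstacle: one must carefully interchange the ultrafilter limit with the various approximation estimates while guaranteeing that the codomain is a genuine matrix algebra --- this is exactly where Proposition~\ref{p:allnuc}, the Choi--Effros lifting property of finite-dimensional algebras, and the trace-preserving conditional expectations $\mathcal{Q}\to M_d$ all enter --- and in (c)$\Rightarrow$(b) one needs the identification of the tracial corner $p\mathcal{Q}_\omega p$ with $\mathcal{Q}_\omega$ in order to unscale the constant $\gamma$.
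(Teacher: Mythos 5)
Your proposal is correct and follows essentially the same route as the paper, which simply runs the proof of \cite[Proposition 1.4]{TikuisisWhiteWinter-QDnuc} and uses Proposition~\ref{p:allnuc} to guarantee that every c.p.\ map into $\ell^\infty(\mathcal Q)$ (hence every map obtained by lifting and reindexing) is nuclear, together with the Choi--Effros lifting theorem and the hereditary-corestriction and corner-rescaling arguments for $(iic)\Rightarrow(iib)$ exactly as you describe. The only cosmetic difference is that you lift by first factoring through a matrix algebra and lifting the finite-dimensional leg, whereas the paper lifts the nuclear maps directly; both are standard and equivalent here.
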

\begin{proof}
Except for the proof of $(iic) \Rightarrow (iib)$, the proof is identical to that of \cite[Proposition 1.4]{TikuisisWhiteWinter-QDnuc}, where we use Proposition \ref{p:allnuc} to make sure that all c.p.~maps are nuclear.

The proof of $(iic) \Rightarrow (iib)$ is basically the same as in \cite{TikuisisWhiteWinter-QDnuc}, but needs a minor change to make sure that the constructed $\theta$ is nuclear. Let $\phi_n \colon A \to \mathcal Q_\omega$ be a sequence of nuclear maps which are approximately multiplicative and such that $\tau_{\mathcal Q_\omega} \circ \phi_n = \gamma \tau_A$ for all $n$. Since $\| \phi_n \| = \| \phi_n(1_A)\| \approx \| \phi_n(1_A)\|^2$ for large $n$, we may assume that $\| \phi_n\|$ is bounded, say by $2$.
Thus by the Choi--Effros lifting theorem we may lift each $\phi_n$ to a nuclear map $(\phi_n^{(k)})_{k=1}^\infty \colon A \to \ell^\infty(\mathcal Q)$ of norm at most $2$. Thus, by a standard diagonal argument for ultraproducts, we get a c.p.~map $\phi \colon A \to \ell^{\infty}(\mathcal Q)$ such that when composed with the quotient map onto $\mathcal Q_\omega$ we get an embedding $\theta \colon A \to \mathcal Q_\omega$ such that $\tau_{\mathcal Q_\omega} \circ \theta(a) = \gamma \tau_A(a)$ for all $a\in A$. By Proposition \ref{p:allnuc} $\phi$ is nuclear and thus so is $\theta$. Now one proceeds as in \cite{TikuisisWhiteWinter-QDnuc}, where one notes that the corestriction $\theta \colon A \to \theta(1_A) \mathcal Q_\omega \theta(1_A)$ is also nuclear, since $\theta(1_A) \mathcal Q_\omega \theta(1_A)$ is a hereditary $C^\ast$-subalgebra of $\mathcal Q_\omega$.
\end{proof}

We have the following lemma from \cite{TikuisisWhiteWinter-QDnuc}. The author is grateful to the authors for including this lemma.

\begin{lemma}[\cite{TikuisisWhiteWinter-QDnuc}, Lemma 5.7]\label{l:mainlemma}
Let $A$ be a separable, unital, exact $C^\ast$-algebra in the UCT class, and let $\tau_A$ be a faithful trace on $A$. Suppose there are $\ast$-homomorphisms $\Theta \colon C([0,1]) \to \mathcal Q_\omega$, $\grave \Phi \colon C_0([0,1),A) \to \mathcal Q_\omega$ and $\acute \Phi \colon C_0((0,1],A) \to \mathcal Q_\omega$ such that $\Theta$ is unital and $\grave \Phi$ and $\acute \Phi$ are nuclear and compatible with $\Theta$, and
\begin{eqnarray*}
\tau_{\mathcal Q_\omega} \circ \grave \Phi &=& \tau_{\mathrm{Leb}} \otimes \tau_A, \quad \textrm{and} \\
\tau_{\mathcal Q_\omega} \circ \acute \Phi &=& \tau_{\mathrm{Leb}} \otimes \tau_A.
\end{eqnarray*}
Then for any finite subset $F$ of $A$ and $\epsilon >0$, there exists $N\in \mathbb N$ and a nuclear map $\Psi \colon A \to \mathcal Q_\omega \otimes M_{2N}$ such that
\begin{eqnarray*}
\| \Psi(xy) - \Psi(x)\Psi(y) \| &<& \epsilon , \quad x,y\in F, \\
(\tau_{\mathcal Q_\omega} \otimes \tau_{M_{2N}}) (\Psi(x)) &=& \tfrac{1}{2} \tau_A(x), \quad x\in A.
\end{eqnarray*}
\end{lemma}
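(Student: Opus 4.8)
\begin{skproof}
This is \cite[Lemma 5.7]{TikuisisWhiteWinter-QDnuc}; I sketch a proof, keeping track of nuclearity by means of the results of Section~2. Write $f(t)=t$ and $g(t)=1-t$, so $f,g,fg\in C([0,1])$ with $g,fg\in C_0([0,1))$ and $f,fg\in C_0((0,1])$. First I would unpack the order zero structure in the hypotheses: the maps $\grave\phi:=\grave\Phi(g\otimes-)$ and $\acute\phi:=\acute\Phi(f\otimes-)$ are nuclear c.p.c.\ order zero maps $A\to\mathcal Q_\omega$ (nuclear as compositions of the nuclear $\grave\Phi,\acute\Phi$ with c.p.\ maps), and by Theorem~\ref{t:nuconetoone} and Proposition~\ref{p:orderzerodilation} the $\ast$-homomorphisms $\grave\Phi,\acute\Phi$ are essentially those associated to $\grave\phi,\acute\phi$. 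Thus there are $\ast$-homomorphisms $\grave\pi,\acute\pi$ out of $A$ with images commuting with $\Theta(C([0,1]))$ and $\grave\phi=\Theta(g)\grave\pi(-)$, $\acute\phi=\Theta(f)\acute\pi(-)$, where I use compatibility of $\grave\Phi,\acute\Phi$ with $\Theta$ to identify the scalar parts. From $\tau_{\mathcal Q_\omega}\circ\grave\Phi=\tau_{\mathcal Q_\omega}\circ\acute\Phi=\tau_{\mathrm{Leb}}\otimes\tau_A$ together with $\int_0^1 g=\int_0^1 f=\tfrac12$ I get $\tau_{\mathcal Q_\omega}\circ\grave\phi=\tau_{\mathcal Q_\omega}\circ\acute\phi=\tfrac12\tau_A$, and by approximating the constant function $1$ by cone functions of integral close to $1$ I also extract $\tau_{\mathcal Q_\omega}\circ\grave\pi=\tau_{\mathcal Q_\omega}\circ\acute\pi=\tau_A$.

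Next comes the matrix construction. Identifying $\mathcal Q_\omega\otimes M_{2N}$ with $M_2(\mathcal Q_\omega\otimes M_N)$, I would seek $\Psi$ of block form with corners $\grave\phi(-)\otimes 1_N$ and $\acute\phi(-)\otimes 1_N$ and off-diagonal block determined by a single map $B\colon A\to\mathcal Q_\omega\otimes M_N$. For any such $\Psi$, with the off-diagonal block of trace zero, one has $(\tau_{\mathcal Q_\omega}\otimes\tau_{M_{2N}})(\Psi(x))=\tfrac12(\tfrac12\tau_A(x)+\tfrac12\tau_A(x))=\tfrac12\tau_A(x)$, so the trace condition holds automatically. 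The identities $\grave\phi(x)\grave\phi(y)=\grave\phi(xy)-\Theta(fg)\grave\pi(xy)$ and $\acute\phi(x)\acute\phi(y)=\acute\phi(xy)-\Theta(fg)\acute\pi(xy)$ show that $\Psi$ is $\epsilon$-multiplicative on $F$ exactly when $B$ straightens the order zero defects, which forces $B(x)=(\Theta((fg)^{1/2})\otimes 1_N)\,V(x)$ for a map $V$ that is, on $F$, an approximate two-sided intertwiner of $\grave\pi\otimes 1_N$ and $\acute\pi\otimes 1_N$. When $V$ comes from a near unitary conjugation $\acute\pi\approx\mathrm{Ad}(u)\circ\grave\pi$ with $u$ in the relative commutant of $\Theta(C([0,1]))$, one checks that $\Psi(x)$ takes the form $w(\grave\pi(x)\otimes 1_N)w^\ast$ for an isometry $w=(\Theta(g)^{1/2}\otimes 1_N,\,u(\Theta(f)^{1/2}\otimes 1_N))^t$; hence $\Psi$ is c.p.c., is approximately multiplicative on $F$ up to the defect in $u$, has the right trace, and is nuclear since it is a compression of the nuclear map $\grave\pi\otimes 1_N$ and $\mathrm{Ad}$-conjugation preserves nuclearity (cf.\ Remark~\ref{r:nucopconcon}, Corollary~\ref{c:inducednuc}; alternatively, lift to $\ell^\infty(\mathcal Q)$ and invoke Proposition~\ref{p:allnuc}).

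The heart of the matter—and what I expect to be the main obstacle—is producing $V$, i.e.\ showing that the two ``endpoint fibres'' $\grave\pi$ and $\acute\pi$ of the path of supporting $\ast$-homomorphisms are sufficiently equivalent. This is where the UCT hypothesis enters: $\grave\pi$ and $\acute\pi$ are nuclear $\ast$-homomorphisms from the exact, UCT-class algebra $A$ into the universal-UHF ultraproduct (or into its relative commutant in $\mathcal Q_\omega$, which again absorbs $\mathcal Q$), they induce the same trace $\tau_A$, and all relevant $K$-theoretic invariants vanish, so the stably finite uniqueness theorem for nuclear maps gives that they are approximately unitarily equivalent. Distributing an approximating unitary path across the $N$ blocks of $M_N$ (a Berg-type argument—this is also the role of the parameter $N$) then upgrades this, for $N$ large depending on $F$ and $\epsilon$, to an $F$-$\epsilon$ near intertwiner $V\colon A\to\mathcal Q_\omega\otimes M_N$ of the required form, with image commuting with $\Theta(C([0,1]))\otimes 1_N$. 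Carrying this out while simultaneously respecting that commutation and keeping every auxiliary map nuclear is the delicate part; modulo it, the two steps above assemble into the desired $\Psi$.
\end{skproof}
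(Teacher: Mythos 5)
First, a point of order: the paper does not prove this lemma at all --- it is imported verbatim from \cite[Lemma 5.7]{TikuisisWhiteWinter-QDnuc} (the acknowledgements even thank those authors for including it), so there is no in-paper argument to compare against; the proof lives in that paper. Your sketch does capture the correct global shape of that argument: extract the two order zero legs $\grave\phi=\grave\Phi(g\otimes-)$, $\acute\phi=\acute\Phi(f\otimes-)$, compute their traces to be $\tfrac12\tau_A$, and patch them into a $2\times 2$ block map whose off-diagonal corrections are supplied by an approximate intertwiner coming from a stable uniqueness theorem; the trace bookkeeping for the block map is also right.

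The genuine gap is in where you invoke uniqueness. You propose to compare the supporting $\ast$-homomorphisms $\grave\pi,\acute\pi$ of the order zero maps directly, asserting that ``all relevant $K$-theoretic invariants vanish'' so that a stably finite uniqueness theorem yields approximate unitary equivalence. These are unital (weakly nuclear) $\ast$-homomorphisms out of $A$ itself: their $KK$-classes have no reason to vanish, and no available theorem asserts that two unital, trace-preserving, nuclear embeddings of a separable exact UCT algebra into $\mathcal Q_\omega$ are approximately unitarily equivalent --- were such a classification of embeddings known, the whole quasidiagonality theorem would follow far more directly. The argument in \cite{TikuisisWhiteWinter-QDnuc} instead compares $\acute\Phi$ with the reflection of $\grave\Phi$ (precomposition with $t\mapsto 1-t$) as two nuclear $\ast$-homomorphisms on the \emph{cone} $C_0((0,1],A)$, which is $KK$-contractible; the Dadarlat--Eilers stable uniqueness theorem (this is where the UCT enters) then produces a unitary intertwining $\acute\Phi\oplus\theta^{\oplus N}$ and $(\grave\Phi\circ\mathrm{flip})\oplus\theta^{\oplus N}$ up to $(F,\epsilon)$, and it is this stabilisation by $N$ copies of a reference map --- not a Berg-type distribution of a unitary path --- that produces the $N$ in $M_{2N}$. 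Relatedly, your sketch never uses the faithfulness of $\tau_A$, which is an essential hypothesis: it is what makes the reference map totally full, a requirement of the stable uniqueness theorem. Finally, note that $\grave\pi,\acute\pi$ land only in multipliers of hereditary subalgebras of $\mathcal Q_\omega$ (cf.\ Proposition \ref{p:orderzerodilation}), not in $\mathcal Q_\omega$ itself, so conjugating them by unitaries of $\mathcal Q_\omega\otimes M_N$ and asserting that the resulting $\Psi$ lands in $\mathcal Q_\omega\otimes M_{2N}$ needs additional justification that working at the level of $\grave\Phi,\acute\Phi$ avoids.
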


For the definitions of compatible maps and $\tau_{\mathrm{Leb}}$, we refer the reader to \cite{TikuisisWhiteWinter-QDnuc}, since we will not be using these definitions directly in this paper.

In order to apply the above lemma, we need a few generalisations of results in \cite{TikuisisWhiteWinter-QDnuc}.

\begin{proposition}\label{p:traceorderzero}
Let $A$ be a separable, unital $C^\ast$-algebra and $\tau_A$ be an amenable trace on $A$. Then there exists a contractive order zero map $\Omega \colon A \to \mathcal Q_\omega$ such that
\[
\tau_{\mathcal Q_\omega}(\Omega(a) \Omega(1_A)^{n-1}) = \tau_A(a), \qquad a\in A, n \in \mathbb N.
\]
If $A$ is exact, then we may choose that $\Omega$ is nuclear.
\end{proposition}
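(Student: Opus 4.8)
The plan is to realise $\Omega$ as a compression $h^{1/2}\theta(\cdot)h^{1/2}$ of a unital completely positive map $\theta\colon A\to\mathcal Q_\omega$ coming from amenability of $\tau_A$, where $h$ is a positive contraction that equals $1$ modulo the trace-kernel ideal and annihilates the multiplicativity defects of $\theta$. To build $\theta$, fix a dense sequence $(a_k)$ in $A$; amenability of $\tau_A$ together with separability of $A$ lets us choose unital (using that $A$ is unital) completely positive maps $\phi_n\colon A\to M_{k(n)}$ with $\mathrm{tr}\circ\phi_n\to\tau_A$ pointwise and $\|\phi_n(ab)-\phi_n(a)\phi_n(b)\|_{2,\mathrm{tr}}\to 0$ for all $a,b\in A$. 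Composing with unital (hence trace-preserving) embeddings $M_{k(n)}\hookrightarrow\mathcal Q$ gives unital completely positive maps $\psi_n\colon A\to\mathcal Q$, and $\theta:=q\circ(\psi_n)_n\colon A\to\mathcal Q_\omega$ is a unital completely positive map with $\tau_{\mathcal Q_\omega}\circ\theta=\tau_A$ and with $c_{a,b}:=\theta(ab)-\theta(a)\theta(b)$ lying in the trace-kernel ideal $J_{\mathcal Q}:=\{x\in\mathcal Q_\omega:\tau_{\mathcal Q_\omega}(x^\ast x)=0\}$ for all $a,b\in A$ (from the $2$-norm estimate on the dense sequence, extended to all of $A$ by continuity; note also that $\tau_{\mathcal Q_\omega}$ vanishes on $J_{\mathcal Q}$ by Cauchy--Schwarz).

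Next I would use that $J_{\mathcal Q}$ is a $\sigma$-ideal in $\mathcal Q_\omega$ to choose a positive contraction $e\in J_{\mathcal Q}$ which commutes with $C^\ast(\theta(A))$ and acts as a two-sided unit on the separable $\ast$-subalgebra generated by $\{c_{a_i,a_j}\}_{i,j}$, so that $e\,c_{a,b}=c_{a,b}$ for all $a,b\in A$. Put $h:=1-e$: a positive contraction commuting with $\theta(A)$, with $1-h^n=e\sum_{j=0}^{n-1}h^j\in J_{\mathcal Q}$ for every $n\ge 1$, and with $h\,c_{a,b}=0$ for all $a,b$. Set $\Omega:=h^{1/2}\theta(\cdot)h^{1/2}$; it is a completely positive contraction, being a compression of $\theta$. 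It is order zero: if $a,b\ge 0$ with $ab=0$, then, using that $h^{1/2}$ commutes with $\theta(A)$ and that $\theta(a)\theta(b)=-c_{a,b}$, we get $\Omega(a)\Omega(b)=h^{1/2}\big(h\,\theta(a)\theta(b)\big)h^{1/2}=-h^{1/2}(h\,c_{a,b})h^{1/2}=0$. Since $\theta$ is unital, $\Omega(1_A)=h$, so $\Omega(a)\Omega(1_A)^{n-1}=\theta(a)h^{n}$; as $h^{n}\equiv 1$ modulo $J_{\mathcal Q}$ and $\tau_{\mathcal Q_\omega}|_{J_{\mathcal Q}}=0$, we conclude $\tau_{\mathcal Q_\omega}(\Omega(a)\Omega(1_A)^{n-1})=\tau_{\mathcal Q_\omega}(\theta(a))=\tau_A(a)$ for all $a\in A$ and all $n\ge 1$.

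For the nuclearity statement, assume $A$ is exact. Pick positive contractions $h_n\in\mathcal Q$ lifting $h$ (possible since $q$ is a surjection of $C^\ast$-algebras) and keep the lift $(\psi_n)_n$ of $\theta$; then $\widetilde\Omega:=\big(h_n^{1/2}\psi_n(\cdot)h_n^{1/2}\big)_n\colon A\to\ell^\infty(\mathcal Q)$ is a completely positive contraction with $q\circ\widetilde\Omega=\Omega$. By Proposition \ref{p:allnuc}, $\widetilde\Omega$ is nuclear, and hence so is $\Omega=q\circ\widetilde\Omega$.

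The main obstacle is the second step: producing a single positive contraction $h$ that simultaneously commutes with $\theta(A)$ and kills every defect $c_{a,b}$ is exactly what upgrades the compression from being order zero merely ``in the $2$-norm'' to being genuinely order zero, and it rests on the $\sigma$-ideal property of the trace-kernel ideal (a standard but non-trivial feature of $C^\ast$-ultrapowers); without that property one cannot hope for a $\ast$-homomorphism into $\mathcal Q_\omega$, which would be too strong, so the order zero relaxation is essential. A minor additional point is to perform the lifting in the third step compatibly with the quotient $\ell^\infty(\mathcal Q)\to\mathcal Q_\omega$ so that Proposition \ref{p:allnuc} applies directly.
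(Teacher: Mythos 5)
Your proposal is correct and follows essentially the same route as the paper's proof: amenability yields the unital c.p.\ map $\phi_\omega=\theta$ into $\mathcal Q_\omega$ that is multiplicative modulo the trace-kernel ideal, the $\sigma$-ideal property of \cite[Proposition 4.6]{KirchbergRordam-csjiangsu} produces the commuting positive contraction $e$, and compressing by $1-e$ gives the order zero map with the stated trace identity. The only cosmetic differences are that the paper compresses by $1-e$ on each side rather than $(1-e)^{1/2}$, and deduces nuclearity of $\Omega$ directly from nuclearity of $\phi_\omega$ (Proposition \ref{p:allnuc}) instead of lifting the compressed map back to $\ell^\infty(\mathcal Q)$.
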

\begin{proof}
The construction of $\Omega$ follows the strategy of \cite[Proposition 3.2]{SatoWhiteWinter-nucdim}, with some differences since we do not restrict to the extremal trace case.
As $A$ is separable and unital and $\tau_A$ is amenable, we may find a sequence of unital c.p.~maps $\phi_n \colon A \to M_{k(n)}$ such that $\tau_A(a) = \lim \mathrm{tr} \circ \phi_n(a)$ and $\| \phi_n(ab) - \phi_n(a)\phi_n(b)\|_{2,\mathrm{tr}} \to 0$ for all $a,b\in A$. By embedding $M_{k(n)} \subseteq \mathcal Q$ unitally in a trace preserving way, we may replace all $M_{k(n)}$ with $\mathcal Q$ and $\mathrm{tr}$ with $\tau_{\mathcal Q}$. Thus we get a unital c.p.~map $(\phi_n) \colon A \to \ell^\infty(\mathcal Q)$.
By Proposition \ref{p:allnuc}, if $A$ is exact then $(\phi_n)$ is nuclear. Let $\phi_\omega$ be the induced unital c.p.~map $A \to \mathcal Q_\omega$, which is nuclear if $A$ is exact. The induced unital c.p.~map into $\mathcal Q_\omega/J_{\mathcal Q,\tau_{\mathcal Q}}$ is a $\ast$-homomorphism by how we chose our $\phi_n$. Here $J_{\mathcal Q,\tau_{\mathcal Q}}$ is the left kernel of $\tau_{\mathcal Q_\omega}$, i.e.~$J_{\mathcal Q,\tau_{\mathcal Q}} = \{ x \in \mathcal Q_\omega : \tau_{\mathcal Q_\omega}(x^\ast x) =0\}$. This is a two-sided, closed ideal in $\mathcal Q_\omega$ since $\tau_{\mathcal Q}$ is a trace.

Since $J_{\mathcal Q,\tau_{\mathcal Q}}$ is a $\sigma$-ideal by \cite[Proposition 4.6]{KirchbergRordam-csjiangsu}, we may find a positive contraction $e \in J_{\mathcal Q,\tau_{\mathcal Q}} \cap C^\ast( \phi_\omega(A))'$ such that $ec = c$ for all $c \in J_{\mathcal Q,\tau_{\mathcal Q}} \cap C^\ast(\phi_\omega(A))$.
Let $\Omega = (1_{\mathcal Q_\omega}-e) \phi_\omega(-) (1_{\mathcal Q_\omega}-e)$. If $A$ was exact then $\phi_\omega$ was nuclear and thus $\Omega$ would also be nuclear.

Let $a,b \in A$ be positive such that $ab=0$. Since the composition of $\phi_\omega$ with the quotient map onto $\mathcal Q_\omega /J_{\mathcal Q,\tau_{\mathcal Q}}$ is a $\ast$-homomorphism, $\phi_\omega(a)\phi_\omega(b) \in J_{\mathcal Q, \tau_{\mathcal Q}}$. It follows that
\[
\Omega(a)\Omega(b) = (1-e)\phi_\omega(a)(1-e)^2 \phi_\omega(b) (1-e) = (1-e)^2\phi_\omega(a)\phi_\omega(b)(1-e)^2 = 0,
\]
and thus $\Omega$ has order zero.

From the Cauchy--Schwarz inequality we have
\begin{eqnarray*}
|\tau_{\mathcal Q_\omega}(y(1_{\mathcal Q_\omega} - \Omega(1_A))) | &\leq & \tau_{\mathcal Q_\omega} (y^\ast y)^{1/2} \tau_{\mathcal Q_\omega}((1_{\mathcal Q_\omega} - \Omega(1_A))^2)^{1/2} \\
&=& 0 , \qquad y\in \mathcal Q_\omega,
\end{eqnarray*}
as $1_{\mathcal Q_\omega} - \Omega(1_A)$ is in the kernel of $\tau_{\mathcal Q_\omega}$. Using $y=\Omega(a) \Omega(1_A)^{n-2}$ for $n>1$, we get the result by induction.
\end{proof}

\begin{lemma}\label{l:mapslemma}
Let $A$ be a separable, unital, exact $C^\ast$-algebra and $\tau_A$ be an amenable trace on $A$. Then there exist $\ast$-homomorphisms
\[
\Theta \colon C([0,1]) \to \mathcal Q_\omega, \quad \grave \Phi\colon C_0([0,1),A) \to \mathcal Q_\omega, \quad \acute \Phi \colon C_0((0,1],A) \to \mathcal Q_\omega,
\]
satisfying the conditions of Lemma \ref{l:mainlemma}.
\end{lemma}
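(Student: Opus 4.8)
The plan is to follow the construction of the triple $(\Theta,\grave\Phi,\acute\Phi)$ in \cite{TikuisisWhiteWinter-QDnuc}, feeding in the order zero map provided by Proposition \ref{p:traceorderzero} in place of the corresponding step of their argument, and using the material of Section 2 to guarantee that the resulting cone $\ast$-homomorphisms are nuclear. So first I would apply Proposition \ref{p:traceorderzero} to $(A,\tau_A)$ to obtain a \emph{nuclear} contractive order zero map $\Omega\colon A\to\mathcal Q_\omega$ with $\tau_{\mathcal Q_\omega}(\Omega(a)\Omega(1_A)^{n-1})=\tau_A(a)$ for all $a\in A$, $n\in\mathbb N$. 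One computes $\tau_{\mathcal Q_\omega}(\rho_\Omega(g\otimes a))=g(1)\tau_A(a)$, i.e.\ $\tau_{\mathcal Q_\omega}\circ\rho_\Omega$ is a point mass at $t=1$ in the interval direction, so $\Omega$ has to be ``spread out'' before it can match $\tau_{\mathrm{Leb}}\otimes\tau_A$.

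To do this, by a standard reindexing argument inside $\mathcal Q_\omega$ I would choose a positive contraction $\ell$ in the relative commutant $C^\ast(\Omega(A))'\cap\mathcal Q_\omega$ whose spectral distribution with respect to $\tau_{\mathcal Q_\omega}$ is Lebesgue measure on $[0,1]$ and which is trace-independent from $C^\ast(\Omega(A))$, in the sense that $\tau_{\mathcal Q_\omega}(x\,g(\ell))=\tau_{\mathcal Q_\omega}(x)\int_0^1 g\,dt$ for $x\in C^\ast(\Omega(A))$ and $g\in C([0,1])$. Let $\Theta\colon C([0,1])\to\mathcal Q_\omega$ be the unital $\ast$-homomorphism associated to the interval direction as in \cite{TikuisisWhiteWinter-QDnuc} (built from $\ell$, after the minor adjustment needed to arrange compatibility), so that $\tau_{\mathcal Q_\omega}\circ\Theta=\tau_{\mathrm{Leb}}$. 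Then set $\acute\Omega:=\ell\,\Omega(-)=\ell^{1/2}\Omega(-)\ell^{1/2}$ and $\grave\Omega:=(1_{\mathcal Q_\omega}-\ell)\,\Omega(-)$, which are contractive order zero maps $A\to\mathcal Q_\omega$, and which are \emph{nuclear} by Remark \ref{r:nucopconcon} since $\Omega$ is. Letting $\acute\Phi\colon C_0((0,1],A)\to\mathcal Q_\omega$ and $\grave\Phi\colon C_0([0,1),A)\to\mathcal Q_\omega$ be the $\ast$-homomorphisms corresponding to $\acute\Omega$ and $\grave\Omega$ under the identifications $C_0((0,1],A)=C_0(0,1]\otimes A$ and $C_0([0,1),A)\cong C_0(0,1]\otimes A$ (the latter via $t\mapsto1-t$), Theorem \ref{t:nuconetoone} then ensures that $\grave\Phi$ and $\acute\Phi$ are nuclear.

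It remains to check the hypotheses of Lemma \ref{l:mainlemma}. Nuclearity of $\grave\Phi,\acute\Phi$ and unitality of $\Theta$ are as above, and compatibility of $\grave\Phi,\acute\Phi$ with $\Theta$ is checked as in \cite{TikuisisWhiteWinter-QDnuc}, being built into the choice of the common element $\ell$. For the trace identities, on the generators $f^n\otimes a$ of the cone (with $f(t)=t$) one has $\acute\Phi(f^n\otimes a)=\ell^n\,\Omega(1_A)^{n-1}\Omega(a)$, and since $\ell$ is trace-independent from $C^\ast(\Omega(A))$ and $\Omega(1_A)^{n-1}\Omega(a)=\Omega(a)\Omega(1_A)^{n-1}\in C^\ast(\Omega(A))$, Proposition \ref{p:traceorderzero} gives
\[
\tau_{\mathcal Q_\omega}(\acute\Phi(f^n\otimes a))=\Big(\int_0^1 t^n\,dt\Big)\tau_A(a)=(\tau_{\mathrm{Leb}}\otimes\tau_A)(f^n\otimes a).
\]
Since such elements span a dense subspace of $C_0(0,1]\otimes A$ and both sides are continuous, $\tau_{\mathcal Q_\omega}\circ\acute\Phi=\tau_{\mathrm{Leb}}\otimes\tau_A$; the identity for $\grave\Phi$ follows in the same way, using $1_{\mathcal Q_\omega}-\ell$ in place of $\ell$ together with the symmetry $\int_0^1(1-t)^n\,dt=\int_0^1 t^n\,dt$ of Lebesgue measure.

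The main obstacle — compared with \cite{TikuisisWhiteWinter-QDnuc} — is exactly the nuclearity of $\grave\Phi$ and $\acute\Phi$: there $A$ is nuclear, so $C_0(0,1]\otimes A$ is nuclear and every $\ast$-homomorphism out of it is automatically nuclear, whereas here $A$ is merely exact and this has to be argued. This is precisely what Section 2 supplies: the cone $\ast$-homomorphisms $\grave\Phi,\acute\Phi$ arise from the \emph{nuclear} order zero maps $\grave\Omega,\acute\Omega$ (nuclear by Remark \ref{r:nucopconcon}), so Theorem \ref{t:nuconetoone} returns nuclear $\ast$-homomorphisms. The only other place where exactness rather than nuclearity is needed — producing the order zero map $\Omega$ in the first place — is handled by Proposition \ref{p:traceorderzero}, which rests on Proposition \ref{p:allnuc}.
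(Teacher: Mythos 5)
Your proposal is correct and follows essentially the same route as the paper: the paper likewise defers the construction of $\Theta,\grave\Phi,\acute\Phi$ to the proof of \cite[Lemma 2.6]{TikuisisWhiteWinter-QDnuc} and isolates the two new ingredients, namely that $\Omega$ can be taken nuclear (Proposition \ref{p:traceorderzero}) and that the cone $\ast$-homomorphism of a nuclear contractive order zero map is nuclear (Theorem \ref{t:nuconetoone}). The only cosmetic differences are that the paper produces the ``Lebesgue'' element via a positive contraction $k\in\mathcal Q$ and the embedding $\mathcal Q\otimes\mathcal Q_\omega\hookrightarrow\mathcal Q_\omega$ rather than a relative commutant element $\ell$, and obtains nuclearity of $\grave\Phi$ by writing it as a composition through $\acute\Phi$ (also, your cut-down maps $\ell^{1/2}\Omega(-)\ell^{1/2}$ are nuclear simply because composing a nuclear map with a c.p.~map is nuclear, rather than by Remark \ref{r:nucopconcon}).
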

\begin{proof}
\cite[Lemma 2.6]{TikuisisWhiteWinter-QDnuc} is exactly this result, but where $A$ is nuclear instead of exact (and with no amenability assumption on $\tau_A$; this is automatic). If we go through the proof of \cite[Lemma 2.6]{TikuisisWhiteWinter-QDnuc} but where we only assume that $A$ is exact instead of nuclear, and $\tau_A$ is amenable, it is still possible to construct $\ast$-homomorphisms $\Theta, \grave \Phi$ and $\acute \Phi$. We only need to check that these maps are nuclear. We use Proposition \ref{p:traceorderzero} to make sure that $\Omega$ in the proof is nuclear. Since $\grave \Phi$ is constructed as the composition of various maps where one of these is $\acute \Phi$, it suffices to show that $\acute \Phi$ is nuclear.

As in the proof \cite[Lemma 2.6]{TikuisisWhiteWinter-QDnuc}, we fix a positive contraction $k \in \mathcal Q$ (satisfying certain properties). The order zero map $A \to \mathcal Q \otimes \mathcal Q_\omega$ given by $a \mapsto k \otimes \Omega(a)$ is clearly nuclear. Compose this map with a unital embedding $\mathcal Q \otimes \mathcal Q_\omega \hookrightarrow \mathcal Q_\omega$ to obtain a contractive, nuclear order zero map $\phi$. As remarked in \cite[Footnote 11]{TikuisisWhiteWinter-QDnuc}, $\acute \Phi$ is the $\ast$-homomorphism associated to $\phi$ as described in Remark \ref{r:onetoone}. By Theorem \ref{t:nuconetoone}, $\acute \Phi$ is nuclear.
\end{proof}

\begin{theorem}\label{t:mainthm}
Any faithful, amenable trace on a separable, exact $C^\ast$-algebra satisfying the UCT is quasidiagonal.

In particular, a separable, exact, unital, simple $C^\ast$-algebra in the UCT class is quasidiagonal if and only if it has an amenable trace.
\end{theorem}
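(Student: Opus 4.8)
The plan is to reduce to the unital case and then verify condition $(iic)$ of Proposition~\ref{p:qdcharac} with $\gamma=\tfrac12$. First I would dispose of the non-unital case: if $A$ is non-unital, pass to the unitisation $\tilde A$ and the extended trace $\tilde\tau_A$ of Proposition~\ref{p:unitaltrace}. Then $\tilde A$ is again separable and exact, it lies in the UCT class because that class is closed under extensions and $0\to A\to\tilde A\to\mathbb C\to0$ realises $\tilde A$ as an extension of two UCT algebras, and $\tilde\tau_A$ is again faithful: since $A$ is non-unital it is an essential ideal in $\tilde A$, and for $b\in\tilde A$ with $\tilde\tau_A(b^\ast b)=0$ one has $\tau_A\big((ba)^\ast(ba)\big)=\tilde\tau_A\big(a^\ast(b^\ast b)a\big)\le\|a\|^2\tilde\tau_A(b^\ast b)=0$ for every $a\in A$, so $ba=0$ for all $a\in A$ and hence $b=0$. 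By Proposition~\ref{p:unitaltrace}, $\tilde\tau_A$ is amenable, and it is quasidiagonal if and only if $\tau_A$ is; so I may assume $A$ is unital.

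Now $A$ is separable, unital, exact and in the UCT class, and $\tau_A$ is faithful and amenable. I would apply Lemma~\ref{l:mapslemma} to produce $\ast$-homomorphisms $\Theta,\grave\Phi,\acute\Phi$ satisfying the hypotheses of Lemma~\ref{l:mainlemma}, and then feed these into Lemma~\ref{l:mainlemma} (whose remaining hypotheses all hold) to obtain, for each finite $F\subseteq A$ and $\epsilon>0$, an $N\in\mathbb N$ and a nuclear map $\Psi\colon A\to\mathcal Q_\omega\otimes M_{2N}$ that is $\epsilon$-multiplicative on $F$ and satisfies $(\tau_{\mathcal Q_\omega}\otimes\tau_{M_{2N}})\circ\Psi=\tfrac12\tau_A$. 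Since $\mathcal Q$ is the universal UHF algebra, $\mathcal Q\otimes M_{2N}\cong\mathcal Q$, and this isomorphism is automatically trace preserving because $\mathcal Q$ has a unique tracial state; it induces a $\ast$-isomorphism $\Xi\colon\mathcal Q_\omega\otimes M_{2N}\to\mathcal Q_\omega$ with $\tau_{\mathcal Q_\omega}\circ\Xi=\tau_{\mathcal Q_\omega}\otimes\tau_{M_{2N}}$. As composing a nuclear map with a $\ast$-homomorphism again gives a nuclear map, $\phi:=\Xi\circ\Psi\colon A\to\mathcal Q_\omega$ is nuclear, $\epsilon$-multiplicative on $F$, and has $\tau_{\mathcal Q_\omega}\circ\phi=\tfrac12\tau_A$. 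This is precisely condition $(iic)$ of Proposition~\ref{p:qdcharac} with $\gamma=\tfrac12$, so $\tau_A$ is quasidiagonal, proving the first assertion.

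For the ``in particular'' clause: if $A$ is moreover simple and carries an amenable trace $\tau$, then $\tau$ is automatically faithful, since $\{a:\tau(a^\ast a)=0\}$ is a closed two-sided ideal which is proper (as $\tau(1)=1$) and hence zero by simplicity; so the first part shows $\tau$ is quasidiagonal, and then $A$ is quasidiagonal by the last sentence of Proposition~\ref{p:qdcharac}. Conversely, if $A$ is quasidiagonal I would choose a net of unital c.p.\ maps $\phi_\alpha\colon A\to M_{k(\alpha)}$ witnessing this, take a weak$^\ast$-cluster point $\tau$ of the net of states $\mathrm{tr}\circ\phi_\alpha$, note that $\tau$ is a trace because $\tau(ab)=\lim\mathrm{tr}(\phi_\alpha(a)\phi_\alpha(b))=\lim\mathrm{tr}(\phi_\alpha(b)\phi_\alpha(a))=\tau(ba)$ by $(2')$ and the trace property of $\mathrm{tr}$, and observe that the same maps witness amenability of $\tau$ since $\|\cdot\|_{2,\mathrm{tr}}\le\|\cdot\|$.

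The substantive difficulty here — keeping every map nuclear throughout the construction — has already been absorbed into Section~\ref{p:allnuc}'s setup and into Propositions~\ref{p:allnuc} and~\ref{p:qdcharac} together with Lemma~\ref{l:mapslemma}, so at the level of assembling the theorem no single step is hard. The points I would be most careful about are the unital reduction (making sure faithfulness of the trace and UCT membership both survive passage to $\tilde A$) and checking that the isomorphism $\mathcal Q_\omega\otimes M_{2N}\cong\mathcal Q_\omega$ intertwines the relevant traces, so that the normalising constant $\tfrac12$ is genuinely independent of $F$ and $\epsilon$ and condition $(iic)$ applies.
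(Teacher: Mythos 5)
Your proof is correct and follows essentially the same route as the paper: reduce to the unital case via Proposition~\ref{p:unitaltrace}, combine Lemmas~\ref{l:mapslemma} and~\ref{l:mainlemma} to get the $\tfrac12$-trace-scaling nuclear, approximately multiplicative maps into $\mathcal Q_\omega\otimes M_{2N}\cong\mathcal Q_\omega$, and invoke Proposition~\ref{p:qdcharac}$(iic)$. The only (harmless) deviation is in the converse of the ``in particular'' clause, where you extract an amenable trace as a weak$^\ast$-cluster point of $\mathrm{tr}\circ\phi_\alpha$ rather than pulling back $\tau_{\mathcal Q_\omega}$ along the embedding $A\hookrightarrow\mathcal Q_\omega$; your added verifications that faithfulness and the UCT survive unitisation are correct and in fact make explicit details the paper leaves implicit.
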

\begin{proof}
By Proposition \ref{p:unitaltrace}, it suffices to prove the case where our $C^\ast$-algebra $A$ is unital.

Let $F \subset A$ be finite and $\epsilon>0$. By Lemmas \ref{l:mapslemma} and \ref{l:mainlemma}, there is a nuclear map $\Psi \colon A \to \mathcal Q_\omega \otimes M_{2N} \cong \mathcal Q_\omega$ such that
\begin{eqnarray*}
\| \Psi(xy) - \Psi(x) \Psi(y)\| &<& \epsilon, \quad x,y \in F, \\
(\tau_{\mathcal Q_\omega} \otimes \tau_{M_{2N}})(\Psi(x)) &=& \tfrac{1}{2} \tau_A(x), \quad x\in A.
\end{eqnarray*}
It follows from Proposition \ref{p:qdcharac} that $\tau_A$ is quasidiagonal.

For the ``in particular'' part, suppose that $A$ is separable, exact, unital and simple. If $A$ has an amenable trace $\tau_A$, then it is faithful, as $A$ is simple, and thus quasidiagonal. By Proposition \ref{p:qdcharac}, $A$ is quasidiagonal. Conversely, if $A$ is quasidiagonal then there is a unital embedding $\iota \colon A \hookrightarrow \mathcal Q_\omega$ by Proposition \ref{p:qdcharac} (which is basically due to Voiculescu in \cite{Voiculescu-qdhtpy}). Then $\tau_{\mathcal Q_\omega} \circ \iota$ is a quasidiagonal (thus amenable) trace on $A$.
\end{proof}

\begin{example}
In \cite[Theorem 6.2.4]{Brown-invariantmeans}, Brown constructs separable, exact, unital, simple, quasidiagonal $C^\ast$-algebras in the UCT class which contain non-amenable traces. In particular, such $C^\ast$-algebras are non-nuclear, since any trace on a nuclear $C^\ast$-algebra is (uniformly) amenable. Since any separable, exact, unital, simple, quasidiagonal $C^\ast$-algebra contains at least one (necessarily faithful) amenable trace, such $C^\ast$-algebras are examples where Theorem \ref{t:mainthm} applies, and for which the results in \cite{TikuisisWhiteWinter-QDnuc} do not apply. 
\end{example}


\section{Traces on quasidiagonal $C^\ast$-algebras}

This section is dedicated to proving the following theorem.

\begin{theorem}\label{t:tracesonqd}
Let $A$ be a separable, exact, quasidiagonal $C^\ast$-algebra in the UCT class. Then any amenable trace on $A$ is quasidiagonal.
\end{theorem}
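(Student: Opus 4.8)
The plan is to reduce to the faithful case, where Theorem \ref{t:mainthm} already applies, by quotienting out the trace kernel. Let $A$ be as in the statement and let $\tau_A$ be an amenable trace on $A$. By Proposition \ref{p:unitaltrace} we may assume $A$ is unital. Let $I_{\tau_A} = \{a \in A : \tau_A(a^\ast a) = 0\}$ be the (two-sided, closed) ideal which is the kernel of the GNS construction, and set $B = A / I_{\tau_A}$ with quotient map $\pi \colon A \to B$. Then $\tau_A$ factors through $\pi$ as a \emph{faithful} trace $\tau_B$ on $B$, i.e. $\tau_A = \tau_B \circ \pi$. The key point is that it suffices to show $\tau_B$ is a quasidiagonal trace on $B$: if $\psi_\alpha \colon B \to M_{k(\alpha)}$ is a net of c.p.c.~maps witnessing quasidiagonality of $\tau_B$, then $\psi_\alpha \circ \pi \colon A \to M_{k(\alpha)}$ witnesses quasidiagonality of $\tau_A$, since condition (1) is preserved as $\tau_A = \tau_B \circ \pi$ and conditions (2qd) are preserved because $\pi$ is multiplicative. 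So everything reduces to verifying that $B$ and $\tau_B$ satisfy the hypotheses of Theorem \ref{t:mainthm}.

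The hypotheses to check for $B$ are: separability (immediate, as a quotient of a separable algebra), exactness (quotients of exact $C^\ast$-algebras are exact), that $\tau_B$ is amenable (this should follow from amenability of $\tau_A$: the maps $\phi_\alpha \colon A \to M_{k(\alpha)}$ witnessing amenability of $\tau_A$ factor approximately through $B$ in the relevant seminorms, since $\|\phi_\alpha(a)\|_{2,\mathrm{tr}}^2 \to \tau_A(a^\ast a)$ vanishes for $a \in I_{\tau_A}$; one perturbs to genuinely defined maps on $B$ via a quasicentral-type argument or by noting $\|\cdot\|_{2,\mathrm{tr}}$-continuity), and faithfulness of $\tau_B$ (by construction). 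The remaining, and genuinely substantive, hypothesis is that $B$ lies in the UCT class. This is where quasidiagonality of $A$ must be used, presumably together with a theorem identifying the trace-kernel ideal $I_{\tau_A}$, or the quotient $B$, as $KK$-equivalent to something built from $A$; one expects that for a quasidiagonal algebra the relevant ideal is "small" enough (e.g. the quotient remains in the bootstrap class) — this likely invokes a structural result about quasidiagonal $C^\ast$-algebras and the UCT, possibly that $A$ quasidiagonal forces the trace-kernel quotient to remain in the UCT class, or an appeal to results on how the UCT class behaves under the passage to such quotients.

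Once $B$ and $\tau_B$ are known to satisfy all hypotheses of Theorem \ref{t:mainthm}, that theorem gives that $\tau_B$ is quasidiagonal, and pulling back along $\pi$ as above completes the proof. The main obstacle I anticipate is precisely the UCT step for $B$: the UCT class is not obviously closed under arbitrary quotients, so one needs the quasidiagonality hypothesis on $A$ to control the ideal $I_{\tau_A}$ — for instance by showing $I_{\tau_A}$ (or $B$) is stably projectionless / nuclear / has vanishing $K$-theory in a way that makes $\pi$ a $KK$-equivalence or makes $B$ manifestly bootstrap. A secondary technical point is the perturbation argument needed to pass a net of c.p.c.~maps on $A$ that is only \emph{approximately} supported on $B$ to an honest net on $B$, but this is the kind of $\|\cdot\|_{2,\mathrm{tr}}$-versus-$\|\cdot\|$ bookkeeping that is routine in this area.
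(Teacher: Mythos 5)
Your reduction to the faithful case by passing to $B=A/I_{\tau_A}$ has a genuine gap exactly where you suspected: there is no way to put $B$ in the UCT class. The UCT class is not known to be closed under quotients by arbitrary ideals, and the trace-kernel ideal $I_{\tau_A}$ carries no structural control whatsoever (it need not be nuclear, need not have accessible $K$-theory, and need not satisfy the UCT itself, which is what a two-out-of-three argument for the extension $0\to I_{\tau_A}\to A\to B\to 0$ would require). There is also no result of the kind you hope for that lets quasidiagonality of $A$ rescue this: quasidiagonality does not pass to quotients and says nothing about $I_{\tau_A}$. Indeed, in your argument quasidiagonality of $A$ is never genuinely used, so if the UCT step could be filled you would have proved that every amenable trace on a separable, exact $C^\ast$-algebra in the UCT class is quasidiagonal --- a statement the paper is careful not to claim (compare Section \ref{s:removeuct}, where the closely related problem of descending quasidiagonality of traces to quotients is shown to be equivalent to the full conjecture without UCT). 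The remaining ingredients of your reduction (pulling back quasidiagonality of $\tau_B$ along $\pi$, exactness and separability of $B$, amenability of $\tau_B$ via Proposition \ref{p:inducedamtrace}) are all fine, but they are not where the difficulty lies.

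The paper goes in the opposite direction: instead of quotienting $A$ down to something with a faithful trace, it realises $A$ as a quotient of a separable, exact, unital, RFD $C^\ast$-algebra $E$ in the UCT class via the quasidiagonal extension $0\to\bigoplus_n M_{k(n)}\to E\to A\to 0$ built from approximately multiplicative, approximately isometric u.c.p.\ maps --- this is precisely where quasidiagonality of $A$ enters. The RFD case is then handled by Brown's Theorem \ref{t:brown}: $E$ embeds, approximately trace-preservingly, into a simple, separable, exact, unital Popa algebra in the UCT class, where every trace is automatically faithful and Theorem \ref{t:mainthm} applies; amenability is transported to the Popa algebra using the hyperfiniteness characterisation of Lemma \ref{l:locrefamenable}. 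Quasidiagonality of the trace then descends from $E$ back to $A$ by Proposition \ref{p:inducedamtrace}, which is legitimate precisely because the extension by $\bigoplus_n M_{k(n)}$ is quasidiagonal --- the very hypothesis that fails for your extension $0\to I_{\tau_A}\to A\to B\to 0$.
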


The similar result for nuclear $C^\ast$-algebras is proved in \cite{TikuisisWhiteWinter-QDnuc}. 
Their result is proved using their main theorem (the nuclear version of our Theorem \ref{t:mainthm}) together with results of N.~Brown in \cite{Brown-invariantmeans}. 

\begin{example}\label{e:conetrace}
Let $A$ be any separable, exact $C^\ast$-algebra which contains an amenable trace $\tau_A$. Then $C_0((0,1],A)$ is a separable, exact $C^\ast$-algebra which is homotopic to zero. Thus it is in the UCT class, and it is quasidiagonal by \cite{Voiculescu-qdhtpy}. If $\mathrm{ev}_1$ denotes the quotient map which is evaluation at $1$, then $\tau_A \circ \mathrm{ev}_1$ is a quasidiagonal trace on $C_0((0,1],A)$.

Note that we did not need to assume that $A$ was in the UCT class. This observation will be used in Proposition \ref{p:nonuct}.
\end{example}

We will be using the same method for proving Theorem \ref{t:tracesonqd} as in \cite{TikuisisWhiteWinter-QDnuc}, however, there is a minor obstacles to this approach.
The results of interest in the paper of Brown, give criteria for when \emph{any} trace is quasidiagonal, and not just when an amenable trace is quasidiagonal.
However, following the same strategy as Brown will give us the result.

We would like to use the following result of Brown. We will actually not use that the $C^\ast$-algebra $A$ in the statement is a \emph{Popa algebra}, but simply use that these are simple and unital (they are also quasidiagonal). Thus we refer the reader to \cite{Brown-invariantmeans} for the definition of Popa algebras.

Recall, that a $C^\ast$-algebra $E$ is called \emph{residually finite dimensional} (RFD) if for every non-zero $x\in E$ there is a finite dimensional representation $\pi$ of $E$ such that $\pi(x) \neq 0$.

\begin{theorem}[\cite{Brown-invariantmeans}, Theorem 5.3.1 and Remark 6.2.3]\label{t:brown}
Let $E$ be a separable, exact, unital, RFD $C^\ast$-algebra in the UCT class.
There exists a separable, exact Popa algebra $A$ in the UCT class with the following property: for every $\epsilon >0$ there is an embedding $\rho_\epsilon \colon E \hookrightarrow A$, such that for every trace $\tau_E$ on $E$ there is a trace $\tau_A$ on $A$ satisfying
\begin{itemize}
\item[$(1)$] $| \tau_A \circ \rho_\epsilon (x) - \tau_E(x)| \leq \epsilon \| x\|$, for all $x\in E$, and
\item[$(2)$] $\pi_{\tau_A}(A)'' \cong R \overline \otimes \pi_{\tau_E}(E)''$, where $\pi_\tau$ is the GNS representation of $\tau$, and $R$ is the hyperfinite $II_1$-factor. 
\end{itemize}
\end{theorem}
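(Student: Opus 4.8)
The plan is to produce $A$ as an inductive limit $A = \varinjlim(A_n,\phi_n)$ of tractable building blocks, following the strategy by which one constructs simple quasidiagonal $C^\ast$-algebras with prescribed tracial data. First one exploits the RFD hypothesis: as $E$ is separable and RFD there is a sequence $\pi_1,\pi_2,\dots$ of finite-dimensional representations of $E$ that separates the points of $E$, and after passing to direct sums one may arrange $\|\pi_i(x)\| \to \|x\|$ for every $x\in E$. Each $A_n$ is taken to be a subhomogeneous ``interpolating'' algebra --- a dimension-drop / mapping-cone type algebra over $[0,1]$, with one boundary fibre resembling $E\otimes M_{k_n}$ and the other assembled from the finite-dimensional data $\{\pi_i\}$ --- and the connecting maps $\phi_n$ are (approximately) block diagonal, combining three kinds of blocks: an $\operatorname{id}_E$-type block tensored with matrix amplifications (to keep a copy of $\pi_{\tau_E}(E)$ alive in the GNS closure and, via the growing amplifications, to manufacture the hyperfinite factor), finite-dimensional blocks $\pi_i\otimes(\cdot)$ with multiplicities (to force simplicity and the finite-dimensional corner property of a Popa algebra), and a spreading/rotation component gradually transporting mass from the $E$-end towards the finite-dimensional end (again for simplicity). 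The matrix sizes $k_n$ are chosen so as to realise a supernatural number, and the multiplicities are chosen so that on any prescribed finite subset the finite-dimensional blocks eventually dominate in operator norm.

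Granting such a construction, the structural claims follow from standard permanence and approximation arguments. For simplicity and the Popa property: a nonzero positive element is detected at some stage by some $\pi_i$-block, and the spreading component then propagates it throughout the tower, so $A$ is simple; quasidiagonality is automatic for an inductive limit of the quasidiagonal $A_n$; and at each stage the finite-dimensional blocks in $\phi_n(A_n)$, being block summands, commute with the corresponding corner projection and realise it as the unit of a finite-dimensional subalgebra which, by the domination condition, almost isometrically and almost multiplicatively captures any prescribed finite subset of $A$ --- this is exactly the definition of a Popa algebra. For exactness and the UCT class: each $A_n$ is either subhomogeneous (hence nuclear and type I, so exact and in the UCT class) or of the form $E\otimes(\text{nuclear})$ (exact since $E$ is exact, and in the UCT class since $E$ is and the bootstrap class is closed under tensoring with nuclear $C^\ast$-algebras in it), and both exactness and the UCT class are closed under countable inductive limits and under extensions; this is the content of Brown's Remark 6.2.3, refining the bare construction of Theorem 5.3.1.

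For the traces and the von Neumann algebra: given a trace $\tau_E$ on $E$ one solves the compatibility recursion $\tau_{A_{n+1}}\circ\phi_n = \tau_{A_n}$ for traces $\tau_{A_n}$ on the building blocks, inducing $\tau_A$ on $A$. Writing each $\tau_{A_n}$ in terms of a trace on $E$ and the matrix traces, the block-diagonal form of $\phi_n$ turns the recursion into the requirement that this $E$-trace be a convex combination of the ``next'' one and the finite-dimensional traces $\operatorname{tr}\circ\pi_i$ with prescribed weights; solvability up to an error $\epsilon$ uses that the traces $\operatorname{tr}\circ\pi_i$ coming from the RFD structure are weak-$\ast$ dense in the trace simplex of $E$, so that $\tau_E$ can be tracked to within any prescribed tolerance. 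Then $\rho_\epsilon$ is the canonical inclusion into $A$ of the copy of $E$ sitting in a sufficiently late building block, and estimate (1) records precisely this tracking error. Finally ${\pi_{\tau_A}(A)''} = {\overline{\bigcup_n \pi_{\tau_A}(A_n)''}}^{\,w}$, and the $\operatorname{id}_E$-with-amplifications thread identifies this weak closure with $R\,\overline{\otimes}\,\pi_{\tau_E}(E)''$, where the hyperfinite $II_1$ factor $R$ is the weak closure of the ascending chain of matrix amplifications.

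The main obstacle is to choose the connecting maps so that all these demands hold at once: the finite-dimensional blocks must be numerous and sufficiently spread to force both simplicity and the Popa property, while the $\operatorname{id}_E$-thread must retain enough weight to survive into the GNS closure and reproduce $\pi_{\tau_E}(E)''$ rather than merely $R$, and the trace recursion must still be solvable for an arbitrary $\tau_E$. Reconciling these competing requirements --- in particular establishing the weak-$\ast$ density of finite-dimensional-representation traces in the trace simplex of an RFD $C^\ast$-algebra and pinning the weak closure of the limit down to the stated tensor product --- is the technical heart of the argument; by comparison the exactness and UCT bookkeeping of Remark 6.2.3 is routine given the permanence properties quoted above.
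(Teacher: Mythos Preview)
The paper does not contain a proof of this theorem: it is quoted from Brown's memoir \cite{Brown-invariantmeans} (Theorem~5.3.1 together with Remark~6.2.3) and used as a black box in the proof of Theorem~\ref{t:tracesonqd}. There is therefore no proof in the paper to compare your attempt against; what you have sketched is an outline of Brown's construction itself.

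Your outline is broadly in the right spirit---Brown does build $A$ as an inductive limit whose connecting maps mix an $\operatorname{id}_E$-type summand (tensored with matrix amplifications) with summands factoring through finite-dimensional representations of $E$, and the exactness/UCT bookkeeping of Remark~6.2.3 is indeed the routine permanence argument you describe. One step in your sketch, however, is problematic: you assert that the traces $\operatorname{tr}\circ\pi_i$ arising from finite-dimensional representations of an RFD $C^\ast$-algebra are weak-$\ast$ dense in its trace simplex, and use this to solve the trace recursion. Any weak-$\ast$ limit of such traces is quasidiagonal (hence amenable), so this density can fail whenever $E$ carries a non-amenable trace; and the theorem as stated places no amenability hypothesis on $\tau_E$. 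In Brown's argument the estimate in (1) does not come from approximating $\tau_E$ by finite-dimensional traces: rather, one arranges that the $\operatorname{id}_E$-thread carries weight at least $1-\epsilon$ under the trace being built, so that $\tau_A\circ\rho_\epsilon$ equals $\tau_E$ plus an error of size at most $\epsilon$ contributed by the remaining blocks. No density statement is required, and this is precisely what allows an \emph{arbitrary} $\tau_E$ to be tracked.
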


We will also need the following characterisation of amenable traces for locally reflexive $C^\ast$-algebras. By a result of Kirchberg, any exact $C^\ast$-algebra is locally reflexive (see e.g.~\cite[Corollary 9.4.1]{BrownOzawa-book-approx}).

\begin{lemma}[\cite{Brown-invariantmeans}, Corollary 4.3.4]\label{l:locrefamenable}
Let $\tau_A$ be a trace on a separable, unital, locally reflexive $C^\ast$-algebra $A$, and let $\pi_{\tau_A}$ denote the GNS representation of $\tau_A$.
Then $\tau_A$ is amenable if and only if $\pi_{\tau_A}(A)''$ is hyperfinite.
\end{lemma}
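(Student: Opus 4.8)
The plan is to prove the two implications separately. The direction ``$\pi_{\tau_A}(A)''$ hyperfinite $\Rightarrow$ $\tau_A$ amenable'' is elementary and needs no reflexivity hypothesis; the converse is where local reflexivity is essential.

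Write $M := \pi_{\tau_A}(A)''$ and let $\tau_M$ be the canonical faithful normal trace with $\tau_M\circ\pi_{\tau_A}=\tau_A$. If $M$ is hyperfinite, write it as the weak$^\ast$ closure of an increasing union of finite-dimensional $\ast$-subalgebras $N_n$ and let $E_n\colon M\to N_n$ be the $\tau_M$-preserving conditional expectations, so $E_n(x)\to x$ in $\|\cdot\|_{2,\tau_M}$ for all $x\in M$. Taking $\phi_n:=E_n\circ\pi_{\tau_A}\colon A\to N_n$, these are unital c.p., they satisfy $\mathrm{tr}_{N_n}\circ\phi_n=\tau_A$ exactly, and they satisfy $\|\phi_n(ab)-\phi_n(a)\phi_n(b)\|_{2,\mathrm{tr}}\to 0$ since multiplication is jointly $2$-norm continuous on bounded sets; after embedding each $N_n$ trace-preservingly into a single matrix algebra this exhibits $\tau_A$ as amenable.

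For the converse, assume $\tau_A$ is amenable, witnessed by unital c.p.\ maps $\phi_\alpha\colon A\to M_{k(\alpha)}$ with $\mathrm{tr}\circ\phi_\alpha\to\tau_A$ and $\|\phi_\alpha(ab)-\phi_\alpha(a)\phi_\alpha(b)\|_{2,\mathrm{tr}}\to 0$. I would show $M$ is semidiscrete, which for a von Neumann algebra with separable predual is equivalent to being hyperfinite (see \cite{BrownOzawa-book-approx}); since $\|\cdot\|_{2,\tau_M}$ metrizes the strong topology on bounded sets, it suffices to produce, for each finite $F\subseteq M$ and $\varepsilon>0$, unital c.p.\ maps $M\xrightarrow{P}M_n\xrightarrow{Q}M$ with $\|QP(x)-x\|_{2,\tau_M}<\varepsilon$ for $x\in F$. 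By Kaplansky density one may take $F\subseteq\pi_{\tau_A}(A)$. The crux is that local reflexivity lets one transport finite-dimensional pieces of $M$ back into $A$: the normal surjection $\pi_{\tau_A}^{\ast\ast}\colon A^{\ast\ast}\to M$ identifies $M$ with the weak$^\ast$-closed ideal $z_{\tau_A}A^{\ast\ast}$, so the operator system $E$ spanned by $1$ and the relevant elements of $\pi_{\tau_A}(A)$ may be regarded as a finite-dimensional operator system in $A^{\ast\ast}$, and local reflexivity supplies a unital c.p.\ map $\theta\colon E\to A$ with $\pi_{\tau_A}\circ\theta$ arbitrarily close in $2$-norm to the inclusion $E\hookrightarrow M$ (after convexifying to turn weak$^\ast$ approximation into norm approximation). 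One then takes $P$ to be $\phi_\alpha\circ\theta$ suitably extended to $M$, and $Q$ to be a return map $M_{k(\alpha)}\to M$ built, via trace-adjoints, from the trace-preserving embedding $M\hookrightarrow\prod_\omega(M_{k(\alpha)},\mathrm{tr})$ that amenability provides; checking that $QP$ is within $\varepsilon$ of the identity on $F$ uses the asymptotic multiplicativity and asymptotic trace-preservation of the $\phi_\alpha$ together with the fact that the trace-adjoint of a trace-preserving $\ast$-homomorphism inverts it. Equivalently, one can argue through hypertraces: amenability of $\tau_A$ says $\tau_A$ extends to an $A$-central state on $B(H)$ for a faithful representation $A\subseteq B(H)$, the task becomes to upgrade this to an $M$-central state, and then the classical fact that a finite von Neumann algebra with separable predual admitting an $M$-central state in its standard form is injective finishes the proof.

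The step I expect to be the main obstacle is exactly this interface. Local reflexivity only provides weak$^\ast$ approximations and only by completely positive maps, not by $\ast$-homomorphisms, whereas the approximate multiplicativity of the $\phi_\alpha$ lives in the $\|\cdot\|_{2,\mathrm{tr}}$ norm; one therefore cannot simply compose ``exact'' approximations, and must carefully track how the errors from the two stages combine when measured in the $2$-norm. In the hypertrace formulation the same difficulty surfaces as the failure of a naive Kaplansky-density argument: the $A$-central state on $B(H)$ need not be normal, so passing from $A$-centrality to $M$-centrality again forces a finite-dimensional use of local reflexivity against finitely many prescribed normal functionals, followed by an averaging argument over the unitary group of $M$. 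This error balancing is the content of Brown's argument in \cite{Brown-invariantmeans} (with exactness replaced by local reflexivity, which is legitimate since exact $C^\ast$-algebras are locally reflexive by Kirchberg's theorem), and I would carry it out along those lines.
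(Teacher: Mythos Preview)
The paper does not give its own proof of this lemma: it is quoted verbatim as \cite[Corollary~4.3.4]{Brown-invariantmeans} and used as a black box. There is therefore nothing in the paper to compare your argument against.

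On the merits of your sketch: the direction ``hyperfinite $\Rightarrow$ amenable'' is correct and standard as you wrote it. For the converse you have identified the right strategy (pass to semidiscreteness, or equivalently upgrade the $A$-central hypertrace to an $M$-central one, and use local reflexivity to move finite-dimensional pieces of $M$ back into $A$), and you have correctly located the genuine difficulty, namely combining the weak$^\ast$ approximation coming from local reflexivity with the $\|\cdot\|_{2,\mathrm{tr}}$ approximation coming from amenability. But you have not actually carried this step out: the sentence ``I would carry it out along those lines'' is a promise, not a proof, and the construction of the return map $Q$ via ``trace-adjoints'' is left vague enough that one cannot verify it from what you wrote. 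If your intent is simply to invoke Brown's result, then cite it and move on, as the paper does; if you intend to reprove it, the converse implication needs to be written out in full.
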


When given a trace $\tau$ on a $C^\ast$-algebra $A$, such that $\tau$ vanishes on a two-sided, closed ideal $I$ in $A$, then there is an induced trace $\overline \tau$ on $A/I$. If $\tau$ has some desired property (e.g.~it is quasidiagonal or amenable) then we would often want $\overline \tau$ to have the same property.

Recall, that an extension of $C^\ast$-algebras $0 \to B \to E \to A \to 0$ is called \emph{quasidiagonal} if $B$ has an approximate identity of projections, which is quasi-central in $E$.
In particular, if $B$ is a direct sum of unital $C^\ast$-algebras, then any extension by $B$ is quasidiagonal, since $B$ has an approximate identity of projections which is central in the multiplier algebra $\multialg{B}$. This observation will be used in the proof of Theorem \ref{t:tracesonqd}.

The following proposition is an easy consequence of \cite[Propositions 3.5.8, 3.5.10 and Remark 3.5.9]{Brown-invariantmeans}.

\begin{proposition}\label{p:inducedamtrace}
Suppose that $0 \to B \to E \to A \to 0$ is an extension of $C^\ast$-algebras which is locally liftable (e.g.~if $E$ is locally reflexive), let
$\tau_E$ be a trace on $E$ such that $\tau_E(B) = 0$, and let $\tau_A$ be the induced trace on $A$. If $\tau_E$ is amenable, then $\tau_A$ is amenable.

Moreover, if the extension is quasidiagonal and if $\tau_E$ is quasidiagonal, then $\tau_A$ is quasidiagonal.
\end{proposition}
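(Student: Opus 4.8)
The plan is to transport the completely positive approximation pictures of amenable and quasidiagonal traces from $E$ to $A$ along a local c.p.c.\ splitting of the quotient map $\pi\colon E\to A$, which exists because the extension is locally liftable. First note that $\tau_A(\pi(e)):=\tau_E(e)$ is a well-defined trace on $A$ exactly because $\tau_E(B)=0$, and that by Proposition \ref{p:unitaltrace} we may pass to unitisations (leaving $B$, quasidiagonality of the extension and local liftability unaffected), so we may assume $A$ and $E$ unital; this only serves to keep later Arveson extensions unital and contractive. For the amenable statement, fix c.p.c.\ maps $\psi_\alpha\colon E\to M_{k(\alpha)}$ with $\mathrm{tr}\circ\psi_\alpha\to\tau_E$ pointwise and $\|\psi_\alpha(xy)-\psi_\alpha(x)\psi_\alpha(y)\|_{2,\mathrm{tr}}\to0$. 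The crucial observation is that these maps are automatically negligible on $B$: by the Kadison--Schwarz inequality, for $b\in B$,
\[
\|\psi_\alpha(b)\|_{2,\mathrm{tr}}^2\ \le\ \mathrm{tr}\,\psi_\alpha(b^\ast b)\ \longrightarrow\ \tau_E(b^\ast b)\ =\ 0 ,
\]
since $b^\ast b\in B$. Now given a finite $F\subset A$ and $\epsilon>0$ I would choose a finite-dimensional operator system $X\subseteq A$ containing $1_A$, $F$ and all products of pairs from $F$, a unital c.p.c.\ lift $\sigma\colon X\to E$, and (for $\alpha$ large) the Arveson extension $\phi_\alpha\colon A\to M_{k(\alpha)}$ of $\psi_\alpha\circ\sigma$. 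Then $\mathrm{tr}\,\phi_\alpha(a)=\mathrm{tr}\,\psi_\alpha(\sigma(a))\to\tau_A(a)$ for $a\in F$, while for $a,b\in F$ the defect $\phi_\alpha(ab)-\phi_\alpha(a)\phi_\alpha(b)=\psi_\alpha(\sigma(ab)-\sigma(a)\sigma(b))+(\psi_\alpha(\sigma(a)\sigma(b))-\psi_\alpha(\sigma(a))\psi_\alpha(\sigma(b)))$ is $\|\cdot\|_{2,\mathrm{tr}}$-small: the first summand because $\sigma(ab)-\sigma(a)\sigma(b)\in B$, the second by approximate multiplicativity of $\psi_\alpha$. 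Letting $(F,\epsilon)$ run through the usual directed set yields a net witnessing amenability of $\tau_A$.

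For the moreover part I would additionally use that the extension is quasidiagonal---so $B$ has an approximate unit $(e_\lambda)$ of projections, quasi-central in $E$---and that quasidiagonality of $\tau_E$ lets the $\psi_\alpha$ be chosen \emph{norm}-approximately multiplicative. The extra step is to compress each $\psi_\alpha$ by a projection annihilating $\psi_\alpha(B)$ in norm. With $F$, $\epsilon$ and $\sigma$ as above, first fix $\lambda$ large enough that $e_\lambda$ approximately commutes with each $\sigma(a)$ and approximately absorbs, on both sides, each $b_{x,y}:=\sigma(xy)-\sigma(x)\sigma(y)\in B$ ($x,y\in F$); then for $\alpha$ large the positive, norm-approximately idempotent element $\psi_\alpha(e_\lambda)$ lies within $\epsilon$ of a projection $p_\alpha\in M_{k(\alpha)}$ with $\mathrm{tr}(p_\alpha)\approx\tau_E(e_\lambda)=0$ and $\|[p_\alpha,\psi_\alpha(\sigma(a))]\|$ small (here norm-approximate multiplicativity turns the commutator into $\psi_\alpha$ of a small commutator in $E$). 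I would then set $\phi_\alpha=(1-p_\alpha)\psi_\alpha(\sigma(-))(1-p_\alpha)$, corestrict to $(1-p_\alpha)M_{k(\alpha)}(1-p_\alpha)\cong M_{k'(\alpha)}$, and extend by Arveson. Since $\psi_\alpha(b_{x,y})\approx\psi_\alpha(e_\lambda b_{x,y}e_\lambda)\approx p_\alpha\psi_\alpha(b_{x,y})p_\alpha$, the element $(1-p_\alpha)\psi_\alpha(b_{x,y})(1-p_\alpha)$ is \emph{norm}-small, and combined with norm-approximate multiplicativity and the commutator bound this gives $\|\phi_\alpha(ab)-\phi_\alpha(a)\phi_\alpha(b)\|<\epsilon$ for $a,b\in F$. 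For the trace, $k'(\alpha)/k(\alpha)=1-\mathrm{tr}(p_\alpha)\approx1$ and, by the trace identity together with $|\mathrm{tr}_{M_{k(\alpha)}}(\psi_\alpha(\sigma(a))p_\alpha)|\le\|a\|\,\mathrm{tr}(p_\alpha)$, one gets $\mathrm{tr}_{M_{k'(\alpha)}}\phi_\alpha(a)\approx\mathrm{tr}_{M_{k(\alpha)}}\psi_\alpha(\sigma(a))\to\tau_A(a)$. Again letting $(F,\epsilon)$ vary produces a net witnessing quasidiagonality of $\tau_A$.

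The only real work is in the quasidiagonal case, and the main obstacle will be managing the order of quantifiers: $\sigma$ must be chosen first, then the projection index $\lambda$ large relative to the \emph{finitely many lifted elements of $E$} coming from $F$, and only then $\alpha$, so that quasi-centrality of $(e_\lambda)$ and norm-approximate multiplicativity of $\psi_\alpha$ interlock as used above; one must also check that compressing by a projection of small but nonzero trace perturbs the normalised trace only through the factor $1-\mathrm{tr}(p_\alpha)$. These are exactly the estimates organised in \cite[Propositions 3.5.8, 3.5.10 and Remark 3.5.9]{Brown-invariantmeans}, so I would cite them rather than reproduce every bound.
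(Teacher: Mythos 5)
Your argument is correct and is essentially the argument the paper relies on: the paper gives no proof beyond citing \cite[Propositions 3.5.8, 3.5.10 and Remark 3.5.9]{Brown-invariantmeans}, and what you have written is a faithful unpacking of exactly that argument (local c.p.c.\ lift plus the Kadison--Schwarz observation that $\|\psi_\alpha(b)\|_{2,\mathrm{tr}}\to 0$ for $b\in B$ in the amenable case, and cutting by the almost-projection $\psi_\alpha(e_\lambda)$ of small trace in the quasidiagonal case). The quantifier ordering you flag ($\sigma$, then $\lambda$, then $\alpha$) is indeed the only delicate point, and you handle it correctly.
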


In the following section, Section \ref{s:removeuct}, we will discuss the possibility of removing the quasidiaogonality assumption on the extension in the above proposition, at least in the case when $E$ is exact. It turns out that this is equivalent to removing the UCT assumptions in Theorem \ref{t:mainthm}.

We can now prove the main theorem of this section.

\begin{proof}[Proof of Theorem \ref{t:tracesonqd}]
We will start by proving that any trace on a separable, exact, unital, RFD $C^\ast$-algebra in the UCT class is quasidiagonal, so let $E$ be such a $C^\ast$-algebra and $\tau_E$ be an amenable trace on $E$.

Let $A$ be a separable, exact Popa algebra in the UCT class (which is simple and unital) provided by Theorem \ref{t:brown}. Fix a finite subset $F \subseteq E$ of contractions and $\epsilon >0$ (to prove that $\tau_E$ is quasidiagonal). We may find an embedding $\rho_\epsilon \colon E \hookrightarrow A$ 
and a trace $\tau_A$ on $A$ such that
\[
|\tau_A \circ \rho_\epsilon (x) - \tau_E(x) | < \epsilon \| x\|, \quad x\in E.
\]
and $\pi_{\tau_A}(A)'' \cong R \overline \otimes \pi_{\tau_E}(E)''$.

Since $\tau_E$ is amenable and $E$ is exact, it follows from Lemma \ref{l:locrefamenable} that $\pi_{\tau_E}(E)''$ is hyperfinite. Thus $\pi_{\tau_A}(A)'' \cong R \overline \otimes \pi_{\tau_E}(E)''$ is hyperfinite and again by Lemma \ref{l:locrefamenable}, $\tau_A$ is amenable. Since $A$ is separable, exact, unital, simple and in the UCT class, it follows from Theorem \ref{t:mainthm} that $\tau_A$ is quasidiagonal. 

Thus, we may find a unital c.p.~map $\phi \colon A \to M_n$ such that $\phi$ is multiplicative up to $\epsilon$ on $\rho_\epsilon(F)$ and such that 
\[
|\tau_A(\rho_\epsilon(x)) - \mathrm{tr}((\phi \circ \rho_\epsilon)(x))| < \epsilon, \quad x\in F.
\]
Thus $\phi \circ \rho_\epsilon$ is contractive, it is multiplicative up to $\epsilon$ on $F$ and 
\[
|\tau_E(x) - \mathrm{tr}((\phi \circ \rho_\epsilon)(x))| < 2\epsilon, \quad x\in F.
\]
It follows that $\tau_E$ is quasidiagonal. This proves the result for RFD $C^\ast$-algebras.

Now, let $A$ be as given in the theorem, and let $\tau_A$ be an amenable trace on $A$. Note that if $A$ is non-unital, then its unitisation is again separable, exact, quasidiagonal and in the UCT class. Thus by Proposition \ref{p:unitaltrace} we may assume that $A$ is unital. As $A$ is separable, unital and quasidiagonal, we may pick a sequence of unital c.p.~maps $\phi_n \colon A \to M_{k(n)}$ which is approximately multiplicative and approximately isometric. 

Let $\phi = (\phi_n) \colon A \to \prod_{n=1}^\infty M_{k(n)}$ and let $E = \bigoplus_{n=1}^\infty M_{k(n)} + C^\ast(\phi(A))$. This gives us an extension
\[
0 \to \bigoplus_{n=1}^\infty M_{k(n)} \to E \xrightarrow{q} A \to 0,
\]
for which $\phi \colon A \to E$ is a unital c.p.~split. Since the ideal $\bigoplus_{n=1}^\infty M_{k(n)}$ is a direct sum of unital $C^\ast$-algebras, the extension is quasidiagonal.
Thus $E$ is separable, exact, unital, RFD and in the UCT class, so the trace $\tau_A \circ q$ on $E$ is amenable by Proposition \ref{p:inducedamtrace}, and thus quasidiagonal by what we proved above. It follows again from Proposition \ref{p:inducedamtrace} that $\tau_A$ is quasidiagonal.
\end{proof}


\section{Separability and UCT assumptions}\label{s:removeuct}

It was asked by N.~Brown \cite[Question 6.7(2)]{Brown-invariantmeans} whether every amenable trace is quasidiagonal. Thus it seems desirable to remove the UCT assumptions from our main Theorem \ref{t:mainthm}, which would also allow us to remove the separability assumption, and would solve the question of Brown for exact $C^\ast$-algebras. This would also imply that the Blackadar--Kirchberg problem for simple $C^\ast$-algebras has an affirmative answer. This is what we will address in this final section.

As in Proposition \ref{p:inducedamtrace}, suppose that $0 \to B \to E \to A \to 0$ is an extension of $C^\ast$-algebras, $\tau_E$ is a trace on $E$ such that $\tau_E(B) = 0$, and let $\tau_A$ be the induced trace on $A$. It is often important to know, that if $\tau_E$ has a certain property, then $\tau_A$ has the same property. If $E$ is exact, then the extension is locally liftable, and thus by Proposition \ref{p:inducedamtrace}, if $\tau_E$ is amenable, so is $\tau_A$.

In Proposition \ref{p:inducedamtrace} we need quasidiagonality of the extension to conclude, that when $\tau_E$ is quasidiagonal then $\tau_A$ is quasidiagonal. We could ask if it is possible to omit quasidiagonality of the extension, as in the amenable trace case. It turns out that this question is equivalent to asking whether or not we can remove the UCT assumption (and separability) of our main theorem, Theorem \ref{t:mainthm}. In fact, we get the following.

\begin{proposition}\label{p:nonuct}
The following are equivalent.
\begin{itemize}
\item[$(i)$] Any amenable trace on an exact $C^\ast$-algebra is quasidiagonal,
\item[$(i')$] Any amenable trace on a separable, exact $C^\ast$-algebra is quasidiagonal,
\item[$(ii)$] For any exact $C^\ast$-algebra $A$, any quasidiagonal trace $\tau_A$ on $A$, and any two-sided, closed ideal $J$ in $A$ with $\tau_A(J) = 0$, the induced trace on $A/J$ is quasidiagonal,
\item[$(ii')$] For any separable, exact $C^\ast$-algebra $A$, any quasidiagonal trace $\tau_A$ on $A$, and any two-sided, closed ideal $J$ in $A$ with $\tau_A(J) = 0$, the induced trace on $A/J$ is quasidiagonal.
\end{itemize}
\end{proposition}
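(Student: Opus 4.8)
The plan is to prove the cycle of implications $(i) \Rightarrow (ii) \Rightarrow (ii') \Rightarrow (i') \Rightarrow (i)$, where the first and third arrows are the substantive ones and the other two are essentially trivial separability reductions (and $(i) \Rightarrow (i')$ is a fortiori). Let me describe each.

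\textbf{The trivial arrows.} For $(i) \Rightarrow (ii)$: given an exact $A$, a quasidiagonal trace $\tau_A$, and an ideal $J \subseteq A$ with $\tau_A(J) = 0$, the quotient $A/J$ is again exact, and the induced trace $\overline\tau$ on $A/J$ is amenable because quasidiagonal traces are amenable and, since $A/J$ is exact hence locally reflexive, the induced trace inherits amenability (one can also invoke Proposition \ref{p:inducedamtrace} directly, as the extension $0 \to J \to A \to A/J \to 0$ is locally liftable). Then $(i)$ applied to $A/J$ gives that $\overline\tau$ is quasidiagonal. For $(ii) \Rightarrow (ii')$: this is immediate since $(ii')$ is the restriction of $(ii)$ to separable algebras. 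Similarly $(i) \Rightarrow (i')$ is immediate.

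\textbf{The arrow $(i') \Rightarrow (i)$.} This is a standard separabilisation argument. Given an exact (possibly nonseparable) $C^\ast$-algebra $A$ with an amenable trace $\tau_A$, to verify quasidiagonality of $\tau_A$ we must, for each finite $F \subseteq A$ and $\epsilon > 0$, produce a contractive c.p.\ map $\phi \colon A \to M_k$ with $\|\phi(ab) - \phi(a)\phi(b)\| < \epsilon$ on $F$ and $|\tau_A(a) - \operatorname{tr}(\phi(a))| < \epsilon$ on $F$. Choose a separable $C^\ast$-subalgebra $A_0 \subseteq A$ containing $F$; the restriction $\tau_A|_{A_0}$ is a trace on the separable, exact algebra $A_0$, and it is amenable (amenability of traces passes to subalgebras — e.g.\ via the characterisation in terms of the GNS von Neumann algebra for locally reflexive algebras, or directly by restricting the defining net of c.p.\ maps). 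By $(i')$, $\tau_A|_{A_0}$ is quasidiagonal, which yields the required $\phi$ on $A_0 \supseteq F$.

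\textbf{The arrow $(ii') \Rightarrow (i')$: the main obstacle.} Let $A$ be separable and exact with amenable trace $\tau_A$. The idea, following the proof of Theorem \ref{t:tracesonqd}, is to realise $\tau_A$ as the quotient of a quasidiagonal trace on a larger separable exact algebra. Since $\tau_A$ is amenable and $A$ is locally reflexive, there is a net (hence, by separability, a sequence) of contractive c.p.\ maps $\phi_n \colon A \to M_{k(n)}$ with $\operatorname{tr}\circ\phi_n \to \tau_A$ pointwise and $\|\phi_n(ab) - \phi_n(a)\phi_n(b)\|_{2,\operatorname{tr}} \to 0$. Set $\phi = (\phi_n) \colon A \to \prod_n M_{k(n)}$ and $E = \bigoplus_n M_{k(n)} + C^\ast(\phi(A))$, giving a locally liftable extension $0 \to \bigoplus_n M_{k(n)} \to E \xrightarrow{q} A \to 0$ with $E$ separable and exact. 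Here is the key point where this differs from Theorem \ref{t:tracesonqd}: there the maps $\phi_n$ were approximately multiplicative \emph{in norm} (because $A$ was quasidiagonal), so $E$ was RFD and $\tau_A \circ q$ was quasidiagonal by inspection; now they are only approximately multiplicative in $\|\cdot\|_{2,\operatorname{tr}}$, so $E$ need not be RFD and there is no obvious reason $\tau_A \circ q$ is quasidiagonal. The obstacle, then, is to construct \emph{some} separable exact $E$ carrying a quasidiagonal trace whose ideal-quotient is $(A, \tau_A)$. The resolution is Example \ref{e:conetrace}: take $E = C_0((0,1], A)$, which is separable, exact, and contractible hence quasidiagonal, with the quasidiagonal trace $\tau_A \circ \mathrm{ev}_1$ (and crucially, as noted there, this does not require $A \in$ UCT). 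Then $J = C_0((0,1), A) = \ker(\mathrm{ev}_1)$ satisfies $(\tau_A \circ \mathrm{ev}_1)(J) = 0$, and $(ii')$ applied to this data gives that the induced trace on $E/J \cong A$, namely $\tau_A$, is quasidiagonal. This completes the cycle.

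Thus the genuinely new ingredient is the cone trick of Example \ref{e:conetrace}, which sidesteps the failure of the RFD argument; everything else is bookkeeping about exactness, local reflexivity, and separable approximation.
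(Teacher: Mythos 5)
Your proof follows the paper's argument essentially verbatim: the same four implications, with $(i)\Rightarrow(ii)$ via Proposition \ref{p:inducedamtrace} together with exactness of quotients, $(ii')\Rightarrow(i')$ via the cone trace of Example \ref{e:conetrace}, and $(i')\Rightarrow(i)$ by restricting to a separable subalgebra containing $F$. The only detail you elide is that, having obtained $\phi$ on the separable subalgebra $A_0$, you must extend it to a c.p.\ contraction on all of $A$ (Arveson's extension theorem, as the paper does), since the maps witnessing quasidiagonality of $\tau_A$ are required to be defined on $A$; this is a one-line fix.
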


Note that $(i')$ and $(ii')$ are simply just $(i)$ and $(ii)$ with separability assumptions.

\begin{proof}
$(i') \Rightarrow (i)$: Let $A$ be an exact $C^\ast$-algebra and $\tau$ be an amenable trace on $A$. By Proposition \ref{p:unitaltrace} we may assume that $A$ is unital. Let $F \subset A$ be a finite set and $\epsilon >0$. Then $A_0 = C^\ast(F,1_A)$ is a separable, unital, exact
$C^\ast$-algebra and $\tau_A$ restricts to an amenable trace $\tau_0$ on $A_0$. By $(i')$ $\tau_0$ is quasidiagonal, so there is $\phi \colon A_0 \to M_n$ such that
$\phi(ab) \approx_\epsilon \phi(a)\phi(b)$ and $\mathrm{tr} \circ \phi(a) = \tau_0(a)$ for all $a,b\in F$. By Arveson's extension theorem, $\phi$ extends to $A$, and thus $\tau_A$ is quasidiagonal on $A$.

$(i) \Rightarrow (ii)$: By Proposition \ref{p:inducedamtrace}, the induced trace on $A/J$ is amenable. Since quotients of exact $C^\ast$-algebras are exact by \cite{Kirchberg-CAR}, it follows from $(i)$ that the induced trace on $A/J$ is quasidiagonal.

$(ii) \Rightarrow (ii')$: Obvious.

$(ii') \Rightarrow (i')$: Let $A$ be a separable, exact $C^\ast$-algebras and $\tau_A$ be an amenable trace on $A$. By Example \ref{e:conetrace} it follows that the trace $\tau_A \circ \mathrm{ev}_1$ on $C_0((0,1],A)$ is quasidiagonal. By $(ii')$ it follows that $\tau_A$ is quasidiagonal.
\end{proof}

\begin{remark}
Note that a similar proof as above shows that we may replace exactness with nuclearity in the above proposition. In this case we may remove the amenability assumption on the traces since this is automatic. Thus we the following.

\emph{Any trace on a (separable) nuclear $C^\ast$-algebra is quasidiagonal if and only if for any (separable) nuclear $C^\ast$-algebra $A$ with quasidiagonal trace $\tau_A$, and any two-sided, closed ideal $J$ such that $\tau_A(J) = 0$, the induced trace on $A/J$ is quasidiagonal.}

The ``separable'' statements above are equivalent to the ``non-separable'' statements.

If one and thus all of these results holds, then the Blackadar--Kirchberg problem has an affirmative answer for simple $C^\ast$-algebras.
\end{remark}

\end{document}